\numberwithin{equation}{section}
\numberwithin{equation}{section}
\newtheorem{assum}{Assumption}[section]
\newtheorem{theorem}{Theorem}[section]
\newtheorem{proposition}{Proposition}[section]
\newtheorem{remark}{Remark}[section]
\newtheorem{lemma}{Lemma}[section]
\DeclareMathOperator*{\argmin}{arg\,min}
\DeclareMathOperator*{\argmax}{arg\,max}
\def\mF{{\mathcal F}}
\def\mP{{\mathcal P}}
\def\mK{{\mathcal K}}
\def\mB{{\mathcal B}}
\def\mE{{\mathcal E}}
\def\mX{{\mathcal X}}
\def\mU{{\mathcal U}}
\def\mV{{\mathcal V}}
\def\mY{{\mathcal Y}}
\def\mL{{\mathcal L}}
\def\mI{{\mathcal I}}
\def\mH{{\mathcal H}}
\def\diam{\mathrm{Diam}}
\def\R{{\mathbb R}}
\def\mZ{{\mathcal Z}}
\def\gt{{\rightarrow}}
\def\NI{{\text{NI}}}
\def\eps{{\varepsilon}}
 \def\1{{\mathbf 1}}
\author{Yulong Lu}
\address{(YL)  Department of Mathematics and Statistics, Lederle Graduate Research Tower, University of Massachusetts, 710 N. Pleasant Street, Amherst, MA 01003.}
\email{yulonglu@umass.edu}
\begin{document}
\title[Two-Scale Mean-Field GDA Finds MNE]{Two-Scale Gradient Descent Ascent Dynamics Finds Mixed Nash Equilibria of Continuous Games: A Mean-Field Perspective}

\begin{abstract}
Finding the mixed Nash equilibria (MNE) of a two-player zero sum continuous game is an important and challenging problem in machine learning. A canonical algorithm to finding the MNE is the noisy gradient descent ascent method which in the infinite particle limit gives rise to the {\em Mean-Field Gradient Descent Ascent} (GDA) dynamics on the space of probability measures.  In this paper, we first study the  convergence  of a two-scale Mean-Field GDA dynamics for finding the MNE of the entropy-regularized objective. More precisely we show that for each finite temperature (or regularization parameter), the two-scale Mean-Field GDA with a suitable {\em finite} scale ratio  converges exponentially to the unique MNE without assuming the convexity or concavity of the interaction potential. The key ingredient of our proof lies in the construction of  new Lyapunov functions that dissipate exponentially along the Mean-Field GDA. We further study the simulated annealing of the Mean-Field GDA dynamics. We show that with a temperature schedule that decays logarithmically in time the annealed Mean-Field GDA converges to the MNE of the original unregularized objective.

\smallskip
\noindent \textbf{Keywords.}
  Nash equilibrium, mixed Nash equilbirium,  gradient descent ascent,  mean field, simulated annealing.
\end{abstract}

\maketitle

\section{Introduction}

Minmax learning underpins numerous machine learning methods including Generative Adversarial Networks (GANs) \cite{goodfellow2020generative}, adversarial training \cite{madry2018towards} and reinforcement learning \cite{busoniu2008comprehensive}. The minmax learning is often be formulated as a zero-sum game between min and max players and hence can be formalized as a minmax optimization problem of the form
$$
\min_{x\in\mX} \max_{y\in \mY} K(x,y),
$$
where the function $K$ is the game objective, $x$ and $y$ represent the player strategies. When $K$ is nonconvex in $x$ or nonconcave in $y$, finding the pure Nash \cite{Nash1951} equilibria of $K$ is difficult and sometimes impossible since the pure Nash equilibrium may not even exist. On the other hand, the mixed Nash equilibria (MNE) \cite{glicksberg1952further} where the pure strategies are replaced by mixed strategies modeled by a probability distribution over the set of strategies, do exist for more general objective functions. Formally the mixed Nash equilibria consist of pairs of probability distributions $\mu$ and $\nu$ that solve 
\begin{equation}\label{eq:mne}
   \min_{\mu\in \mP(\mX)} \max_{\nu\in \mP(\mY)}  E_0(\mu,\nu), \text{ where }  E_0(\mu,\nu):= \int_{\mX} \int_{\mY} K(x,y) \mu(dx)\nu(dy).
\end{equation}
Here $\mP(\mX)$ and $\mP(\mY)$  denote the space of probability measures (or the set of all mixed strategies) on the state spaces $\mX$ and $\mY$ respectively. Thanks to Glicksberg's theorem \cite{glicksberg1952further}, a MNE exists if $\mX$ and $\mY$ are finite or if $\mX$ and $\mY$ are compact and $K$ is continuous. 
While MNE exist in much generality, it is still difficult to find them. Several progress have been made recently for finding MNE of high dimensional game problems with applications in GANs. For instance, the work \cite{hsieh2019finding} proposed a mirror-descent algorithm for finding MNE of \eqref{eq:mne} and applied the algorithms for Wasserstein GANs with provable convergence guarantees. The recent work \cite{domingo2020mean,ma2021provably} proposed and analyzed an entropy-regularized version of \eqref{eq:mne}:
\begin{equation}
\label{eq:minmax}
\min_{\mu\in \mP(\mX)} \max_{\nu\in \mP(\mathcal{Y})} E_{\tau}(\mu, \nu), \text{ where } E_{\tau}(\mu,\nu) := \int_{\mX} \int_{\mathcal{Y}} K(x,y) d\nu(y)d\mu(x) - \tau \mH(\mu) + \tau \mH(\nu).
\end{equation}
Here $\mH(\mu) = -\int \log \frac{d\mu}{dx} d\mu$ is the  entropy functional of the probability measure $\mu$, and the parameter $\tau>0$ is the entropy regularization parameter(or temperature). 
Observe that the objective energy functional $E_\tau$ is strongly convex in $\mu$ and strongly concave in $\nu$. As a result of the famous Von Neumann's minmax theorem \cite{v1928theorie,sion1958general,nikaido1955note,nikaido1954neumann}, one has that 
 \begin{equation}\label{eq:minmaxthm}
     \min_{\mu\in \mP(\mX)} \max_{\nu\in \mP(\mathcal{Y})} E_\tau (\mu, \nu) = 
     \max_{\nu\in \mP(\mathcal{Y})}  \min_{\mu\in \mP(\mX)} E_{\tau}(\mu, \nu).
 \end{equation}
 Under very mild assumptions on $K$, Problem  \eqref{eq:minmax} has a  unique MNE $(\mu^\ast, \nu^\ast)$ (see a proof in \cite{domingo2020mean}) in the sense that for any $\mu\in \mP(\mX), \nu \in \mP(\mY)$,
$$
E_\tau(\mu^\ast, \nu) \leq E_\tau(\mu^\ast,\nu^\ast) \leq E_\tau(\mu,\nu^\ast). 
$$
Furthermore, the MNE $(\mu^\ast, \nu^\ast)$ is given by the unique solution of the fixed point equations 
\begin{equation}\label{eq:fpt}
    \mu^\ast(dx)  \propto \exp\Big( - \int_{\mY} \tau^{-1} K(x,y)d\nu^\ast (y) \Big), \quad \nu^\ast(dy)   \propto \exp\Big( \int_{\mX} \tau^{-1} K(x,y)d\mu^\ast (x))\Big).
\end{equation}
In the setting where both players play finite mixtures of $N$ strategies, i.e. $\mu = \sum_{j=1}^N \frac{1}{N} \delta_{X^j}$ and   $\nu = \sum_{j=1}^N \frac{1}{N} \delta_{Y^j}$, the following noisy gradient descent-ascent dynamics is perhaps one of the most natural algorithms for finding MNE of \eqref{eq:minmax}:
\begin{equation}\label{eq:gdaparticle}
    dX^i_t = -\frac{1}{N} \sum_{j=1}^N \nabla_x K(X^i_t, Y^j_t) dt + \sqrt{2\tau} dW^i_t, \quad
dY^i_t = \frac{\eta}{N} \sum_{j=1}^N \nabla_y K(X^j_t, Y^i_t) dt + \sqrt{2\tau\eta} dB^i_t,
\end{equation}
where $W^i_t, B^i_t, i=1,\cdots N$ are independent Brownian motions. The parameter $\eta>0$ in \eqref{eq:gdaparticle} represents the ratio of timescales at which the gradient descent and ascent dynamics is undergoing. When $\eta = 1$, there is no timescale separation. However, GDA with different time scales is commonly used in minmax optimization \cite{jin2020local,lin2020gradient} and sometimes leads to better convergence property \cite{heusel2017gans}.  

In the limit of large number of strategies ($N \gt \infty$), the empirical measures $\mu_t^N = \frac{1}{N}\sum_{i=1}^N \delta_{X^i_t}$ and $\nu_t^N = \frac{1}{N} \sum_{i=1}^N \delta_{Y^i_t}$ of the interacting particle, with $\{X_0^i\}$ and $\{Y_0^i\}$ identically independent sampled from the initial distributions $\mu_0$ and $\nu_0$,  converge weakly within finite time to the solutions $\mu_t$ and $\nu_t$ of the {\em Mean-Field GDA} (also named the Interacting Wasserstein Gradient Flow in \cite{domingo2020mean}) dynamics:
\begin{equation}
\label{eq:WGDA}
\begin{aligned}
\partial_t \mu_t & = \nabla \cdot (\mu_t \int_{\mY} \nabla_x K(x,y)d\nu_t(y)) +\tau \Delta \mu_t,\\
\partial_t \nu_t & = \eta\Big(-\nabla \cdot (\nu_t \int_{\mX} \nabla_y K(x,y)d\mu_t(x)) + \tau\Delta \nu_t\Big).
\end{aligned}
\end{equation}
The dynamics \eqref{eq:WGDA} can be viewed as the minmax analogue of the Mean-Field Langevin \cite{nitanda2022convex,chizat2022mean,hu2021mean}  dynamics that arises naturally from the mean field analysis of optimization of two-layer neural networks. 

Despite its simplicity and popularity, the long-time convergence of the Mean-Field GDA  \eqref{eq:WGDA} is still not well understood, except in the extreme quasi-static \cite{ma2021provably} regime where the ascent dynamics is infinitely faster or slower than the
descent dynamics ($\eta =0 \text{ or } + \infty$). This motivates us to study the first  question, which to the best of our knowledge, remains open:

{\em \textbf{Question 1}: Does the  Mean-Field GDA  \eqref{eq:WGDA} with a finite scale ratio $\eta>0$ converge to the unique MNE? If so, what is the rate of convergence? }

We provide an affirmative answer to \textbf{Question 1} and establish a quantitative exponential convergence of \eqref{eq:WGDA} to the MNE for any fixed $\tau>0$. Furthermore, motivated by the recent simulated annealing results for Langevin dynamics \cite{tang2021simulated,raginsky2017non} in the context of global optimization or for Mean-Field Langevin dynamics \cite{chizat2022mean} in the setting of training two-layer neural networks, it is natural to ask what would happen to the dynamics \eqref{eq:WGDA} in the annealed regime that $\tau \gt 0$ as $t\gt\infty$. In particular, it is natural to ask 

{\em \textbf{Question 2:} Does the annealed Mean-Field GDA \eqref{eq:WGDA} with a decreasing temperature $\tau = \tau_t$ converge to a MNE of the unregularized objective $E_0$ defined in \eqref{eq:mne}?}

\subsection*{Summary of contributions.} In this work we first address \textbf{Question 1} by providing the first convergence analysis of the two-scale continuous-time Mean-Field GDA dynamics \eqref{eq:WGDA} with a \textbf{finite} scale ratio. This improves substantially the earlier convergence results by \cite{ma2021provably,domingo2022simultaneous} on Mean-Field GDA in the quasistatic setting where the scale ratio either vanishes or explodes. The key ingredient of the proof is the construction of a novel Lyapunov function which decreases exponentially along the dynamics \eqref{eq:WGDA}. We then further address \textbf{Question 2} by proving that the annealed Mean-Field GDA converges to a global MNE of the original unregularized objective $E_0$. The latter result, to the best our knowledge, is the first rigorous justification of the global convergence of GDA to MNE in the mean field regime. 

We highlight the major contributions as follows.

\begin{itemize}
    \item For any fixed  temperature $\tau>0$,  we show that in the fast ascent/descent regime (or the scale ratio $\eta$ is either larger or smaller than certain threshold), the Mean-Field GDA dynamics \eqref{eq:WGDA} converges exponentially to the MNE of the entropy-regularized objective $E_\tau$  with respect to certain Lyapunov functions; see Theorem \ref{thm:main1}. 
    
    \item The convergence in Lyapunov functions further implies the global exponential convergence of the dynamics with respect to the relative entropy (see Theorem \ref{thm:main2}).  Our (non-asymptotic) convergence results hold for any positive temperature (regularization parameter) and do not assume convexity or concavity of the interaction potential $K$. The convergence rate is characterized explicitly in terms of $\tau$, bounds on $K$ and diameters of the state spaces.

        \item We also study the simulated annealing of the Mean-Field GDA dynamics \eqref{eq:WGDA}. We prove that with the cooling schedule $\tau_t$ decaying with a logarithimc rate, the Mean-Field GDA dynamics \eqref{eq:WGDA} converges to the global mixed Nash equilibrium of $E_0$ with respect to the Nikaid\` o-Isoda error (defined in \eqref{eq:NIerror}). See Theorem \ref{thm:sa1} for the precise statements. 
\end{itemize}

\subsection{Related work.}

\subsubsection*{\textbf{Minmax optimization}} Most previous works about minmax optimization  focused on the analysis of discrete-time optimization schemes in finite dimensional Euclidean spaces under various assumptions on the objective function. In the convex-concave setting, global convergence guarantees  were obtained for various optimization schemes, such as GDA \cite{daskalakis2019last}, mirror descent \cite{mertikopoulos2018optimistic} and Hamiltonian gradient descent \cite{abernethy2019last}. In the non-convex and/or non-concave setting, local convergence to several notions of stationary points were studied in \cite{daskalakis2018limit,lin2020gradient}. In the special case where the objective satisfies a two-sided Polyak-\L ojasiewicz (PL) condition,  convergence to global Nash equilibria  were recently obtained in \cite{yang2020global,yang2022faster,doan2022convergence} for two-scale GDA. 

\subsubsection*{\textbf{Mean-field analysis}} 
Our study of the Mean-Field GDA dynamics for minmax optimization is strongly motivated by a growing amount of work on mean-field analysis of gradient descent algorithms for minimizing neural networks. The study of the latter originates from the work on two-layer networks \cite{mei2018mean,sirignano2020mean,rotskoff2022trainability,chizat2018global}, to residual networks \cite{lu2020mean,wojtowytsch2020banach} and general deep neural networks \cite{araujo2019mean,sirignano2022mean,nguyen2019mean}. These results show that the mean field dynamics can be realized as Wasserstein gradient flows of certain nonlinear energy functionals on the space of probability measures. Global convergence of the Wasserstein gradient flows were obtained for the mean-field Langevin dynamics in \cite{mei2018mean,chizat2022mean,nitanda2022convex,hu2021mean} and for the noiseless mean-field dynamics in \cite{chizat2018global}. We note that mean-field Langevin dynamics can be viewed as a special Mckean-Vlasov dynamics whose long-time convergence are often established by assuming either the small interaction strength or large noise level; see e.g. \cite{eberle2019quantitative,hu2021mean}. 

Among the existing work on mean field Langevin dynamics, \cite{chizat2022mean} is closest to ours where the author proved the exponential convergence of Mean-Field Langevin dynamics under a certain uniform log-Sobolev inequality assumption, and also proved the  convergence of the annealed Mean-Field Langevin to the  global minimizers of the objective function. Our results are parallel to the results of  \cite{chizat2022mean}, but require new proof strategies. A key ingredient of our proof is constructing new Lyapunov functions that decay exponentially along the Mean-Field GDA \eqref{eq:WGDA}. 

In the context of minmax optimization, \cite{domingo2020mean} first proposed and analyzed the noisy gradient descent ascent flow \eqref{eq:gdaparticle} without scale separation ($\eta = 1$) for finding the MNE. Specifically, the authors therein proved the convergence of \eqref{eq:gdaparticle} to the mean field GDA \eqref{eq:WGDA} in the limit of large agent size, and characterized the equilibrium of  \eqref{eq:WGDA} as the unique solution of the fixed point equation \eqref{eq:fpt}. However, \cite{domingo2020mean} did not provide a proof of the long-time convergence of  \eqref{eq:WGDA}. In \cite{ma2021provably}, the authors obtained convergence guarantees for the mean field GDA \eqref{eq:WGDA} in the quasi-static regime where the ascent dynamics is infinitely faster or slower than the descent dynamics (i.e. $\eta = +\infty \text{ or } 0$). One of the major contributions of the present work goes beyond the quasi-static regime and provides the first proof for exponential convergence of \eqref{eq:WGDA} with a finite scale ratio. 

\subsubsection*{\textbf{Wasserstein-Fisher-Rao gradient flows}}

As an alternative to Wasserstein gradient flows, gradient flow with respect to the Wasserstein-Fisher-Rao (WFR) metric \cite{chizat2018interpolating,kondratyev2016new,laschos2019geometric} has recently sparked a large amount of research on PDEs \cite{brenier2020optimal,kondratyev2019spherical}, neural network training \cite{rotskoff2019global} and statistical sampling \cite{lu2019accelerating,lu2022birth}. Especially, WFR gradient flows give rise to birth-death interacting particle dynamics that enables ``teleportation'' of mass and locations of particles and hence lead to better convergence properties than the Langevin dynamics in certain scenarios \cite{rotskoff2019global,lu2019accelerating,lu2022birth}.  More recently, \cite{domingo2020mean}  adopts a WFR-gradient flow for two-player zero-mean continuous games and proves that the time-averaging of the WFR-dynamics converges to the MNE in the regime where Fisher-Rao dynamics dominates the Wasserstein counterpart. In \cite{wang2022exponentially}, a new particle algorithm motivated by the implicit time-discretization of WFR-gradient flow was proposed for finding the MNE in the space of atomic measures. The authors proved a local exponential convergence of the proposed algorithm under certain non-degeneracy assumptions on the game objective and the  assumption that the MNE is unique.

\subsection{Notations.} For a measurable space $\mX$, we use $\mP(\mX)$ to denote the space of probability measures on $\mX$. Given a canonical Borel measure $dx$ on $\mX$, such as the uniform measure considered in the paper, if $\mu$ is absolutely continuous with respect to $dx$, we denote the Radon-Nikodym derivative by $\frac{d\mu}{dx}$. We define the shannon entropy of $\mu$ by $\mH(\mu) = -\int_{\mX} \log(\frac{d\mu}{dx}) d\mu(x)$. The relative entropy (or Kullback-Leibler divergence) and the relative Fisher information between two probability measures $\mu_1$ and $\mu_2$ are defined by 
$$
\mH(\mu_1|\mu_2) = \int \log\Big(\frac{d\mu_1}{d\mu_2}\Big) d\mu_1, \quad \mI(\mu_1|\mu_2) = \int \Big|\nabla \log \Big(\frac{d\mu_1}{d\mu_2}\Big) \Big|^2 d\mu_1
$$
if $\mu_1$ is absolutely continuous with respect to $\mu_2$ and $+\infty$ otherwise.  Given an approximate mixed Nash equilibrium $(\mu,\nu)$, we  define the Nikaid\` o-Isoda \cite{nikaido1955note} error $\NI(\mu,\nu)$ by 
\begin{equation}\label{eq:NIerror}
    \begin{aligned}
 \NI(\mu,\nu) & := \sup_{\nu^\prime \in \mP(\mY)} E_0(\mu, \nu^\prime) - \inf_{\mu^\prime \in \mP(\mX)} E_0(\mu^\prime,\nu)\\
& = E_0(\mu, \nu) - \inf_{\mu^\prime \in \mP(\mX)} E_0(\mu^\prime,\nu) +  \sup_{\nu^\prime \in \mP(\mY)} E_0(\mu, \nu^\prime)
-E_0(\mu, \nu).
\end{aligned}
\end{equation}
Note that by definition $\NI(\mu,\nu) \geq 0$ for any $\mu\in \mP(\mX)$ and $\nu\in \mP(\mY)$ and $\NI(\mu,\nu) = 0$ if and only if $(\mu,\nu)  = (\mu^\ast,\nu^\ast)$.  

\subsection{Organization} The remaining of the paper is organized as follows. In Section \ref{sec:main} we state the key assumptions and present the main results of the paper. In Section \ref{sec:application} we discuss  applications of our results in training GANs and adversarial learning of PDEs. Section \ref{sec:proofs} devotes to the proofs of main results. We finish with several conclusion remarks and future directions in Section  \ref{sec:conclusion}. Further proof details are provided in the Appendix.

\section{Assumptions and Main Results} \label{sec:main}

Let us first recall the minmax problem of the entropy-regularized continuous game:
\begin{equation}
\label{eq:minmax2}
\min_{\mu\in \mP(\mX)} \max_{\nu\in \mP(\mathcal{Y})} E_{\tau}(\mu, \nu), \text{ where } E_{\tau}(\mu,\nu) := \int_{\mX} \int_{\mathcal{Y}} K(x,y) d\nu(y)d\mu(x) - \tau \mH(\mu) + \tau \mH(\nu).
\end{equation}
When the regularization parameter (or temperature) $\tau = 0$, we write 
 $$
 E_0(\mu,\nu) = \int_{\mX} \int_{\mathcal{Y}} K(x,y) d\nu(y)d\mu(x).
 $$
We are interested in the global convergence of the Mean-Field GDA dynamics to the MNE of \eqref{eq:minmax2}:
$$
\begin{aligned}
\partial_t \mu_t & = \nabla \cdot (\mu_t \int_{\mY} \nabla_x K(x,y)d\nu_t(y)) +\tau \Delta \mu_t,\\
\partial_t \nu_t & = \eta\Big(-\nabla \cdot (\nu_t \int_{\mX} \nabla_y K(x,y)d\mu_t(x)) + \tau\Delta \nu_t\Big).
\end{aligned}
$$

 In what follows,  we may suppress the dependence of $E_{\tau}$ on $\tau$ and write $E= E_\tau$ to simplify notations when the $\tau>0$ is fixed; we will indicate such dependence in later discussions on the annealed dynamics where $\tau = \tau_t$ shrinks to zero in time. 

\subsection{Assumptions}
Following \cite{domingo2022simultaneous} and \cite{wang2022exponentially}, we make the following assumptions on the state spaces $\mX$ and $\mY$.

\begin{assum}\label{assS}
   The state spaces $\mX$ and $\mY$ are (i) either smooth compact manifolds without boundary such that the eigenvalues of the Ricci curvature are strictly positive everywhere or (ii) Euclidean tori (with possibly unequal dimensions). 
\end{assum}

We also make the following smoothness assumption on the potential $K$. 

\begin{assum}\label{assK}
The function $K\in C^2(\mX\times \mY)$ and there exists $K_{xy}> 0$ such that 
$$
 \|\nabla_{xy}^2 K\|_{\infty} \leq K_{xy}.
 $$
\end{assum}
It will be very useful to introduce operators $\mK^+ : \mP(\mX) \gt \mP(\mY)$ and $\mK^- : \mP(\mY) \gt \mP(\mX)$ where 
\begin{equation}\label{eq:mK}
    \mK^+ \mu (dy) = \frac{1}{Z^+(\mu)} \exp\Big(\int_{\mX} \tau^{-1} K(x,y)d\mu(x) \Big), \quad \mK^- \nu (dx) = \frac{1}{Z^-(\nu)} \exp\Big(-\int_{\mY} \tau^{-1} K(x,y)d\nu(y) \Big).
\end{equation}
Here $Z^-(\nu)$ and $Z^+(\mu)$ are the corresponding partition functions defined by 
\begin{equation} \label{eq:Z} 
Z^-(\nu) = \int_{\mX} \exp\Big( - \int_{\mY} \tau^{-1} K(x,y)d\nu (y)\Big) dx, \quad Z^+(\mu)= \int_{\mY} \exp\Big( \int_{\mX} \tau^{-1} K(x,y)d\mu (x)\Big) dy.
\end{equation} 
It is easy to verify by the definition of $E(\mu,\nu)$ that 
\begin{equation}\label{eq:mK2}
   \argmax_{\nu\in \mP(\mY)} E(\mu,\nu) =  \mK^+(\mu), \quad \argmin_{\mu\in \mP(\mX)} E(\mu, \nu) = \mK^{-}(\nu). 
\end{equation}
With the notations above, the MNE $(\mu^\ast, \nu^\ast)$ of \eqref{eq:minmax2} is characterized as the unique solution of the following fixed point equations (see \cite[Theorem 1]{domingo2020mean}):
\begin{equation}\label{eq:fixedpoint}
   \mu^\ast = \mK^-(\nu^\ast), \quad \nu^\ast = \mK^+(\mu^\ast). 
\end{equation}

The lemma below shows that under Assumption \ref{assK} and Assumption \ref{assS}, the measures $\mK^{+} (\mu)$ and $\mK^-(\nu)$ satisfy the logarithmic Sobolev inequalities (LSI) uniformly for any $\mu \in \mP(\mX)$ and $\nu \in \mP(\mY)$.

\begin{lemma}\label{lem:LSI}
There exists a constant $\lambda_{LS}>0$  such that for any $\mu \in \mP(\mX)$ and $\nu \in \mP(\mY)$, 
\begin{equation}\label{eq:LSI}
    \lambda_{LS} \mH(\mu | \mK^-{\nu}) \leq \mI(\mu| \mK^-{\nu}), \quad \lambda_{LS} \mH(\nu| \mK^+{\mu}) \leq \mI(\nu| \mK^+{\mu}).
\end{equation}
\end{lemma}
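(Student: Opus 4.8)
The plan is to obtain the uniform logarithmic Sobolev inequalities as a bounded-perturbation consequence of a \emph{base} LSI for the reference (volume) measures on $\mX$ and $\mY$, via the Holley--Stroock principle, exploiting the fact that the log-densities of $\mK^-\nu$ and $\mK^+\mu$ relative to those reference measures have oscillation bounded uniformly in $\nu$ and $\mu$.

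First I would record the base inequalities. Under Assumption \ref{assS} the normalized volume measure on $\mX$ satisfies a logarithmic Sobolev inequality with some constant $\lambda_\mX>0$ depending only on $\mX$: in case (i) this is the Bakry--\'Emery criterion, since a strictly positive lower bound on the Ricci curvature yields $CD(\rho,\infty)$ with $\rho>0$ and hence an LSI; in case (ii) the normalized Lebesgue measure on the torus $\mX$ is a finite tensor product of the uniform measure on the circle, which satisfies an LSI (classical), and the LSI tensorizes with the same constant. The same reasoning gives a constant $\lambda_\mY>0$ for $\mY$.

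Next I would control the potentials. Since $K\in C^2(\mX\times\mY)$ and $\mX\times\mY$ is compact, $C_K:=\|K\|_\infty<\infty$. For fixed $\nu\in\mP(\mY)$ write $\mK^-\nu(dx)=Z^-(\nu)^{-1}e^{-U_\nu(x)}\,dx$ with $U_\nu(x)=\tau^{-1}\int_\mY K(x,y)\,d\nu(y)$, so that $\operatorname{osc}(U_\nu):=\sup_\mX U_\nu-\inf_\mX U_\nu\le 2\|U_\nu\|_\infty\le 2\tau^{-1}C_K$, a bound \emph{uniform} over $\nu\in\mP(\mY)$; symmetrically $\mK^+\mu(dy)=Z^+(\mu)^{-1}e^{-W_\mu(y)}\,dy$ with $W_\mu(y)=-\tau^{-1}\int_\mX K(x,y)\,d\mu(x)$ and $\operatorname{osc}(W_\mu)\le 2\tau^{-1}C_K$ uniformly in $\mu$. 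Then I would invoke the Holley--Stroock perturbation lemma: if a probability measure $\pi_0$ satisfies the LSI $\lambda\,\mH(\cdot\,|\,\pi_0)\le\mI(\cdot\,|\,\pi_0)$ and $d\pi\propto e^{-U}d\pi_0$ with $U$ bounded, then $\pi$ satisfies the LSI with constant at least $\lambda e^{-\operatorname{osc}(U)}$. Applying this with $\pi_0$ the normalized volume measure on $\mX$ (resp.\ $\mY$) shows that $\mK^-\nu$ (resp.\ $\mK^+\mu$) satisfies \eqref{eq:LSI} with constant at least $\lambda_\mX e^{-2\tau^{-1}C_K}$ (resp.\ $\lambda_\mY e^{-2\tau^{-1}C_K}$), uniformly in $\nu$ (resp.\ $\mu$). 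Setting $\lambda_{LS}:=e^{-2\tau^{-1}C_K}\min\{\lambda_\mX,\lambda_\mY\}>0$ concludes.

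The only delicate point is the base LSI on the state spaces: case (i) is immediate from Bakry--\'Emery thanks to the strict positivity of the Ricci curvature postulated in Assumption \ref{assS}, whereas case (ii) (flat tori, zero curvature) requires instead the classical LSI for the uniform measure on the circle together with tensorization; neither step is hard, but both must be cited correctly, and one must be careful that the resulting constant depends on $\mX,\mY,\tau,C_K$ only and not on $\mu,\nu$ — which is exactly what the \emph{uniform} $L^\infty$ bound on $K$ guarantees through Holley--Stroock. Everything else is routine.
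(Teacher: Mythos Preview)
Your proof is correct and follows exactly the route the paper indicates: establish a base LSI for the normalized volume measures on $\mX$ and $\mY$ under Assumption~\ref{assS} (via Bakry--\'Emery in case (i), via the circle LSI plus tensorization in case (ii)), then apply Holley--Stroock using the uniform bound $\operatorname{osc}(U_\nu),\operatorname{osc}(W_\mu)\le 2\tau^{-1}\|K\|_\infty$. Your explicit constant $\lambda_{LS}=e^{-2\tau^{-1}\|K\|_\infty}\min\{\lambda_\mX,\lambda_\mY\}$ also recovers the behavior \eqref{eq:bdLSI} noted in the paper.
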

Lemma \ref{lem:LSI} follows from the classical Holley-Stroock argument and the fact that the uniform measures on $\mX$ and $\mY$ satisfy the LSI thanks to Assumption \ref{assS}; see e.g. \cite[Proposition 5.2]{chizat2022mean} and   \cite[Proposition 6]{domingo2022simultaneous}). It is worthwhile to note that in the regime where $\tau^{-1} \|K\|_{\infty} \gg 1$, the log-Sobolev constant $\lambda_{LS} = \lambda_{LS}(\tau)$ satisfies that
\begin{equation}\label{eq:bdLSI}
    \lambda_{LS}(\tau) = \mathcal{O}(\exp(-\tau^{-1} \|K\|_{\infty})). 
\end{equation}

\subsection{Exponential convergence of Mean-Field GDA with a fixed temperature}

The goal of this section is to present the global convergence of the GDA dynamics \eqref{eq:WGDA} to the MNE $(\mu^\ast, \nu^\ast)$.
To this end, it will be useful to define the following functionals 
\begin{equation}
\begin{aligned}\label{eq:mLs}
        \mL_1 (\mu) & := \max_{\nu\in \mP(\mY)} E(\mu, \nu) - \min_{\mu\in \mP(\mX)}\max_{\nu\in \mP(\mY)} E(\mu, \nu),\\
        \mL_2 (\mu,\nu) & := \max_{\nu\in \mP(\mY)}  E(\mu, \nu) - E(\mu, \nu),\\
        \mL_3 (\nu) & := \max_{\nu\in \mP(\mY)} \min_{\mu\in \mP(\mX)} E(\mu, \nu)- \min_{\mu\in \mP(\mX)} E(\mu, \nu),\\
        \mL_4 (\mu,\nu) & :=   E(\mu, \nu)- \min_{\mu\in \mP(\mX)}  E(\mu, \nu).
\end{aligned}
\end{equation}
Note that by definition the functionals $\mL_i,i=1,\cdots,4$  defined above are non-negative. 
For a fixed constant $\gamma >0$, we also define the Lyapunov functions
\begin{equation}\label{eq:mLtilde}
 \mL (\mu, \nu) = \mL_1 (\mu) + \gamma \mL_2 (\mu,\nu), \quad \widetilde{\mL} (\mu, \nu) = \mL_3 (\mu) + \gamma \mL_4 (\mu,\nu).   
\end{equation}
Let $\kappa$ be the {\em effective} condition number  defined by 
$$
\kappa = \frac{K_{xy}}{\tau \lambda_{LS}}.
$$
Our first main theorem characterizes the exponential convergence of \eqref{eq:WGDA} in terms of the Lyapunov functions $\mL$ and $\widetilde{\mL}$. 
\begin{theorem}\label{thm:main1} Let Assumption \ref{assS}  and Assumption \ref{assK} hold. For a fixed $\gamma < 1$, let $(\mu_t,\nu_t)$ be the solution to the GDA dynamics \eqref{eq:WGDA} with an initial condition $(\mu_0,\nu_0)$ satisfying $\mL(\mu_0,\nu_0) <\infty$ and $\widetilde{\mL}(\mu_0,\nu_0) <\infty$, and with a finite time-scale ratio $\eta>0$. 

(i) \textbf{Fast ascent regime: } set $
\eta = \frac{2\lambda_{LS}\kappa^2\diam(\mY)^2}{\gamma}.
$
Then for all $t>0$,
$$
\mL (\mu_t, \nu_t) \leq e^{-\alpha_1 t}  \mL (\mu_0, \nu_0)
$$
with 
$$\alpha_1 =  \tau\lambda_{LS} \Big(\frac{1-\gamma}{2} \wedge  \frac{\kappa^2 \diam(\mY)^2(1+3\gamma)}{\gamma}\Big);
$$

(ii) \textbf{Fast descent regime: } set $
\eta  = \frac{\gamma}{2\lambda_{LS}\kappa^2\diam(\mX)^2 (1+ 3\gamma)}.
$
Then for all $t>0$,
$$
\widetilde{\mL} (\mu_t, \nu_t) \leq e^{-\alpha_2 t}  \widetilde{\mL} (\mu_0, \nu_0)
$$
with 
$$\alpha_2 = \frac{\tau \lambda_{LS}}{2} \Big(1\wedge  \eta(1-\gamma)\Big).
$$
\end{theorem}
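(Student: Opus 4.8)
The plan is to read \eqref{eq:WGDA} as Wasserstein gradient \emph{descent} on $E=E_\tau$ in the variable $\mu$ and Wasserstein gradient \emph{ascent}, sped up by $\eta$, in the variable $\nu$, and to track the functionals of \eqref{eq:mLs} along the flow. Write $F(\mu):=\max_{\nu\in\mP(\mY)}E(\mu,\nu)=E(\mu,\mK^+\mu)$ and $G(\nu):=\min_{\mu\in\mP(\mX)}E(\mu,\nu)=E(\mK^-\nu,\nu)$. Since $E(\cdot,\nu)$ is convex and $E(\mu,\cdot)$ concave, $F$ is convex and $G$ concave, so $\mL_1=F-\min F$ and $\mL_3=\max G-G$ are nonnegative; moreover the Gibbs variational principle gives the clean identities $\mL_2(\mu,\nu)=F(\mu)-E(\mu,\nu)=\tau\mH(\nu|\mK^+\mu)$ and $\mL_4(\mu,\nu)=E(\mu,\nu)-\min_{\mu'}E(\mu',\nu)=\tau\mH(\mu|\mK^-\nu)$.

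The new ingredient is a one-sided bound turning $\mL_1,\mL_3$ into relative entropies against the \emph{proximal Gibbs measures} $\mK^-\mK^+\mu$ and $\mK^+\mK^-\nu$: I would prove $\mL_1(\mu)\le\tau\mH(\mu|\mK^-\mK^+\mu)$ and $\mL_3(\nu)\le\tau\mH(\nu|\mK^+\mK^-\nu)$. For the first, $F(\mu)-F(\mu^\ast)\le E(\mu,\mK^+\mu)-E(\mu^\ast,\mK^+\mu)$ because $E(\mu^\ast,\mK^+\mu)\le\max_\nu E(\mu^\ast,\nu)=F(\mu^\ast)$; and since $\mu'\mapsto E(\mu',\mK^+\mu)$ equals $\tau\mH(\mu'|\mK^-\mK^+\mu)$ up to a constant independent of $\mu'$, the right-hand side is $\tau\mH(\mu|\mK^-\mK^+\mu)-\tau\mH(\mu^\ast|\mK^-\mK^+\mu)\le\tau\mH(\mu|\mK^-\mK^+\mu)$.

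Next I would differentiate along \eqref{eq:WGDA}, using $\nabla\tfrac{\delta E}{\delta\mu}(\mu,\mK^+\mu)=\tau\nabla\log\tfrac{d\mu}{d\mK^-\mK^+\mu}$, $\nabla\tfrac{\delta E}{\delta\nu}(\mu,\nu)=-\tau\nabla\log\tfrac{d\nu}{d\mK^+\mu}$, and the Danskin identity $\tfrac{\delta F}{\delta\mu}(\mu)=\tfrac{\delta E}{\delta\mu}(\mu,\mK^+\mu)$. Writing $R_t(x):=\int_\mY\nabla_x K(x,y)\,(\nu_t-\mK^+\mu_t)(dy)$ and $\rho_t:=\nabla\log\tfrac{d\mu_t}{d\mK^-\mK^+\mu_t}$, one finds
\[
\tfrac{d}{dt}\mL_1(\mu_t)=-\tau^2\mI(\mu_t|\mK^-\mK^+\mu_t)-\tau\!\int\!\rho_t\cdot R_t\,d\mu_t,
\]
\[
\tfrac{d}{dt}\mL_2(\mu_t,\nu_t)=\tau\!\int\!\rho_t\cdot R_t\,d\mu_t+\!\int\!|R_t|^2 d\mu_t-\eta\tau^2\mI(\nu_t|\mK^+\mu_t),
\]
and symmetric formulas for $\mL_3,\mL_4$. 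Lemma \ref{lem:LSI} bounds $\mI(\mu_t|\mK^-\mK^+\mu_t)\ge\lambda_{LS}\mH(\mu_t|\mK^-\mK^+\mu_t)\ge(\lambda_{LS}/\tau)\mL_1(\mu_t)$ (via the previous paragraph) and $\mI(\nu_t|\mK^+\mu_t)\ge(\lambda_{LS}/\tau)\mL_2(\mu_t,\nu_t)$; centering $\nabla_x K(x,\cdot)$ and using Assumption \ref{assK} together with Pinsker's inequality gives $\int|R_t|^2 d\mu_t\lesssim\tfrac{K_{xy}^2\diam(\mY)^2}{\tau}\mL_2(\mu_t,\nu_t)$.

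Finally, for $\mL=\mL_1+\gamma\mL_2$ with $\gamma<1$, adding the two identities the mixed terms combine with net coefficient $-(1-\gamma)\tau$, which a Young's inequality (with a tuned weight) absorbs into $-\tau^2\mI(\mu_t|\mK^-\mK^+\mu_t)$, while the leftover $\int|R_t|^2 d\mu_t$ is dominated by $\mL_2$. This leaves
\[
\tfrac{d}{dt}\mL(\mu_t,\nu_t)\le -c_1\tau\lambda_{LS}\,\mL_1(\mu_t)+\Big(c_2\tfrac{K_{xy}^2\diam(\mY)^2}{\tau}-\gamma\eta\,\tau\lambda_{LS}\Big)\mL_2(\mu_t,\nu_t),
\]
and taking $\eta$ as large as in part (i) — which, since $\kappa=K_{xy}/(\tau\lambda_{LS})$ and $\gamma<1$, makes the bracket $\le -c_3\tau\lambda_{LS}$ — gives $\tfrac{d}{dt}\mL\le-\alpha_1\mL$, so Gr\"onwall's lemma yields (i); tracking the $c_i$ and optimizing the Young weight produces the stated $\alpha_1$. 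Part (ii) is symmetric, with $\widetilde\mL=\mL_3+\gamma\mL_4$, the bound $\mL_3\le\tau\mH(\nu|\mK^+\mK^-\nu)$, and a \emph{small} $\eta$ so that the slow ascent makes the analogous bracket negative. I expect the crux to be the Lyapunov construction itself: neither $\mL_1$ alone nor $\mL_1+\mL_2$ dissipates, and it is the combination of the one-sided convexity bound, the weight $\gamma<1$, and a genuinely unequal time scale $\eta$ that lets all coefficients be made negative simultaneously. A secondary, more technical, point is justifying the Danskin-type differentiation of $F,G$ along the flow and the regularity of $(\mu_t,\nu_t)$ needed for the integrations by parts, using that $\mK^+\mu,\mK^-\nu$ are unique and depend smoothly on the measures.
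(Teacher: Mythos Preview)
Your proposal is correct and follows essentially the same route as the paper: the same Lyapunov functions $\mL=\mL_1+\gamma\mL_2$ and $\widetilde\mL=\mL_3+\gamma\mL_4$, the same sandwich inequality $\mL_1(\mu)\le\tau\mH(\mu\,|\,\mK^-\mK^+\mu)$ (your proof of which is exactly the paper's, phrased via $\mL_4(\mu,\mK^+\mu)$), the same uniform LSI, the same Pinsker-based bound on $\int|R_t|^2$, and Gr\"onwall. The only cosmetic difference is that you keep the \emph{exact} identities for $\frac{d}{dt}\mL_1$ and $\frac{d}{dt}\mL_2$ (with the cross term $\tau\!\int\rho_t\cdot R_t$ appearing with opposite signs) and apply Young's inequality \emph{after} forming $\mL_1+\gamma\mL_2$, whereas the paper applies Young's inequality separately to each derivative (its Proposition~\ref{prop:diffL}) and then combines; your bookkeeping makes the partial cancellation of the mixed term more visible and could, with care, yield slightly sharper constants, but the resulting differential inequality and the choice of $\eta$ are the same.
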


\begin{remark}
Theorem \ref{thm:main1} states that the two-scale GDA dynamics \eqref{eq:WGDA} with a suitable finite scale ratio converges exponentially to the equilibrium. This improves substantially the earlier result by \cite{ma2021provably} on GDA  in the quasistatic setting where the scale ratio $\eta = 0$ or $\eta = \infty$. We emphasize that we chose the specific scale ratios merely for the purpose of simplifying the expression of the convergence rate $\alpha$. In fact, by tracking the proof of Theorem \ref{thm:main1}, one can obtain that  the convergence rate 
$$\begin{aligned}
    \alpha = \begin{cases}
C_1 \tau \lambda_{LS},  & \text{ if } \eta > c_1 \lambda_{LS}\kappa^2\diam(\mY)^2,\\
C_2 \tau \lambda_{LS}^2, & \text{ if } \eta <  \frac{c_2\gamma}{\lambda_{LS}\kappa^2\diam(\mY)^2(1+3\gamma)},
\end{cases}
\end{aligned}
$$
for some  constants $C_i, c_i>0, i=1,2$. Moreover, the dependence of the convergence rates on the diameters of the state spaces via $K_{xy} \diam(\mY)$ and $K_{xy}\diam(\mX)$  can be avoided, and especially the latter two quantities can be replaced by $\|\nabla K\|_{\infty}$; see discussions around \eqref{eq:reskl2}.

Observe also that in the low temperature regime ($\tau \ll 1$), the log-Sobolev constant $\lambda_{LS} = \mathcal{O}(\exp(-\tau^{-1} \|K\|_{\infty}))$. Hence Theorem \ref{thm:main1} requires $\eta = \Omega((\tau^{-1} K_{xy})^2/\lambda_{LS})$ (and $\eta = o(\lambda_{LS}\tau^2/ K_{xy}^2)$) to guarantee an exponential convergence rate $\alpha = \mathcal{O}(\tau \lambda_{LS})$ (and $\alpha = \mathcal{O}(\lambda_{LS}^2\tau^3)$) in the fast ascent (descent) regime. Notice that the quadratic dependence of the convergence rate $\alpha$ on $\lambda_{LS}$ in the fast descent regime, in contrast to the linear dependence on $\lambda_{LS}$ in the fast ascent regime, is due to the fact that the ascent dynamics itself is running in the slow time-scale $\eta = o(\lambda_{LS}\tau^2/ K_{xy}^2)$; one would recover the same rate of convergence as in the fast ascent regime if one used the time scales $1/\eta$ for the descent dynamics and $1$ the ascent dynamics.  
\end{remark}

\begin{remark}
    Our construction of Lyapunov functions \eqref{eq:mLtilde} is strongly motivated by the recent study \cite{yang2020global, doan2022convergence} of two-scale GDA for minmax optimization on Euclidean spaces. In the finite dimensional setting, a two-sided PL condition is sufficient to guarantee the exponential convergence of two-scale GDA in both continuous-time \cite{doan2022convergence} and discrete-time \cite{yang2020global,yang2022faster} cases. In our infinite dimensional setting of the Mean-Field GDA, the uniform log-Sobolev inequality plays the same role as the two-sided PL condition, which however, needs to be combined with the entropy sandwich inequalities in Lemma \ref{lem:bdL} to obtain the exponential dissipation of Lyapunov functions.  
\end{remark}

Our next theorem provides an exponential convergence of \eqref{eq:WGDA} with respect to the relative entropy.
\begin{theorem} \label{thm:main2} Let the assumptions of Theorem \ref{thm:main1} hold. Let the rates $\alpha_i,i=1,2$ be defined as in Theorem \ref{thm:main1}. 

(i) \textbf{Fast ascent regime:} set $
\eta = \frac{2\lambda_{LS}\kappa^2\diam(\mY)^2}{\gamma}.
$
Then for all $t>0$,
\begin{equation}\label{eq:tauHnut}
\tau\mH(\mu_t | \mu^\ast)  \leq \frac{e^{-\alpha_1 t}}{\gamma} \mL(\mu_0,\nu_0) \text{ and }  \tau\mH(\nu_t | \nu^\ast) \leq \Big(\frac{2}{\gamma} + \frac{4\|K\|_{\infty}^2}{\tau^2}\Big) e^{-\alpha_1  t}\mL(\mu_0,\nu_0).
\end{equation}

(ii) \textbf{Fast descent regime:} set $
\eta = \frac{2\lambda_{LS}\kappa^2\diam(\mY)^2}{\gamma}.
$
Then for all $t>0$,
\begin{equation}
\tau\mH(\mu_t | \mu^\ast)  \leq \frac{e^{-\alpha_2 t}}{\gamma} \mL(\mu_0,\nu_0) \text{ and }   \tau\mH(\nu_t | \nu^\ast) \leq \Big(\frac{2}{\gamma} + \frac{4\|K\|_{\infty}^2}{\tau^2}\Big) e^{-\alpha_2 t}\mL(\mu_0,\nu_0).
\end{equation}
\end{theorem}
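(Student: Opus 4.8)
The plan is to derive Theorem~\ref{thm:main2} from the Lyapunov decay of Theorem~\ref{thm:main1} by bounding the relative entropies $\mH(\mu_t\,|\,\mu^\ast)$ and $\mH(\nu_t\,|\,\nu^\ast)$ in terms of $\mL(\mu_t,\nu_t)$ (resp.\ $\widetilde{\mL}(\mu_t,\nu_t)$). The first step is a collection of exact identities and one-sided bounds for the building blocks in \eqref{eq:mLs}. A direct computation with the Gibbs form \eqref{eq:mK} and the definition of $E_\tau$ shows
$$
\mL_2(\mu,\nu)=\tau\,\mH(\nu\,|\,\mK^+\mu),\qquad \mL_4(\mu,\nu)=\tau\,\mH(\mu\,|\,\mK^-\nu),
$$
while, using in addition the fixed-point relations \eqref{eq:fixedpoint}, the saddle identity $\max_{\nu}E(\mu^\ast,\nu)=E(\mu^\ast,\nu^\ast)=\min_{\mu}\max_{\nu}E(\mu,\nu)$, and the fact that $\mu\mapsto E(\mu,\nu^\ast)$ is an entropy-penalized linear functional minimized at $\mu^\ast=\mK^-\nu^\ast$ with $E(\mu,\nu^\ast)-E(\mu^\ast,\nu^\ast)=\tau\mH(\mu\,|\,\mu^\ast)$, one gets $\mL_1(\mu)\ge E(\mu,\nu^\ast)-E(\mu^\ast,\nu^\ast)=\tau\mH(\mu\,|\,\mu^\ast)$ and, symmetrically, $\mL_3(\nu)\ge\tau\mH(\nu\,|\,\nu^\ast)$. (These are essentially the entropy sandwich inequalities of Lemma~\ref{lem:bdL}.) In particular $\tau\mH(\mu_t\,|\,\mu^\ast)\le\mL_1(\mu_t)\le\mL(\mu_t,\nu_t)$ and $\tau\mH(\nu_t\,|\,\mK^+\mu_t)=\mL_2(\mu_t,\nu_t)\le\gamma^{-1}\mL(\mu_t,\nu_t)$, and this already gives the $\mu_t$ bound in part~(i) after invoking $\mL(\mu_t,\nu_t)\le e^{-\alpha_1 t}\mL(\mu_0,\nu_0)$ and $\gamma<1$.

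The core of the argument is a weak triangle inequality that upgrades closeness of $\nu_t$ to $\mK^+\mu_t$ and of $\mu_t$ to $\mu^\ast$ into closeness of $\nu_t$ to $\nu^\ast=\mK^+\mu^\ast$; since relative entropy has no genuine triangle inequality, this step must exploit the explicit Gibbs structure of both $\mK^+\mu_t$ and $\nu^\ast$. Splitting $\log\frac{d\nu_t}{d\nu^\ast}=\log\frac{d\nu_t}{d\mK^+\mu_t}+\log\frac{d\mK^+\mu_t}{d\nu^\ast}$ and integrating against $\nu_t$ yields $\mH(\nu_t\,|\,\nu^\ast)=\mH(\nu_t\,|\,\mK^+\mu_t)+\int\log\frac{d\mK^+\mu_t}{d\nu^\ast}\,d\nu_t$. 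Since $\log\frac{d\mK^+\mu_t}{d\nu^\ast}(y)=\tau^{-1}\!\int K(x,y)\,d(\mu_t-\mu^\ast)(x)+c_t$ with $c_t$ independent of $y$, and $\int\log\frac{d\mK^+\mu_t}{d\nu^\ast}\,d\nu^\ast=-\mH(\nu^\ast\,|\,\mK^+\mu_t)\le0$, the extra term is at most $\tau^{-1}\iint K\,d(\mu_t-\mu^\ast)\,d(\nu_t-\nu^\ast)\le 2\tau^{-1}\|K\|_\infty\sqrt{\mH(\mu_t\,|\,\mu^\ast)\,\mH(\nu_t\,|\,\nu^\ast)}$ by Pinsker's inequality and the $L^\infty$ bound on $K$. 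A weighted Young inequality chosen to absorb $\tfrac12\mH(\nu_t\,|\,\nu^\ast)$ back to the left side then gives
$$
\mH(\nu_t\,|\,\nu^\ast)\le 2\,\mH(\nu_t\,|\,\mK^+\mu_t)+\frac{4\|K\|_\infty^2}{\tau^2}\,\mH(\mu_t\,|\,\mu^\ast).
$$

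Combining this with $\tau\mH(\nu_t\,|\,\mK^+\mu_t)\le\gamma^{-1}\mL(\mu_t,\nu_t)$ and $\tau\mH(\mu_t\,|\,\mu^\ast)\le\mL(\mu_t,\nu_t)$ gives $\tau\mH(\nu_t\,|\,\nu^\ast)\le\big(\tfrac{2}{\gamma}+\tfrac{4\|K\|_\infty^2}{\tau^2}\big)\mL(\mu_t,\nu_t)$, and Theorem~\ref{thm:main1}(i) finishes part~(i). Part~(ii) is the mirror image: start from $\widetilde{\mL}(\mu_t,\nu_t)\le e^{-\alpha_2 t}\widetilde{\mL}(\mu_0,\nu_0)$, use $\tau\mH(\nu_t\,|\,\nu^\ast)\le\mL_3(\nu_t)$ and $\tau\mH(\mu_t\,|\,\mK^-\nu_t)=\mL_4(\mu_t,\nu_t)$, and run the same weak triangle inequality with the roles of $\mu$ and $\nu$ (and of $\mK^+$ and $\mK^-$) swapped to control $\mH(\mu_t\,|\,\mu^\ast)$. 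I expect the only delicate point to be the weak triangle inequality: one must pick the right quantity to split off, check that the partition-function constant $c_t$ drops when integrated against the mass-zero measure $\nu_t-\nu^\ast$, and calibrate the Young weight so $\mH(\nu_t\,|\,\nu^\ast)$ is genuinely absorbed — it is precisely this calibration that produces the $\|K\|_\infty^2/\tau^2$ factor and forces the use of the $L^\infty$ bound on $K$ rather than merely the bound on $\nabla^2_{xy}K$.
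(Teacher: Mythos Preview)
Your proposal is correct and follows essentially the same approach as the paper: extract $\tau\mH(\mu_t\,|\,\mu^\ast)\le\mL_1(\mu_t)$ and $\tau\mH(\nu_t\,|\,\mK^+\mu_t)=\mL_2(\mu_t,\nu_t)$ from the Lyapunov decay, then split $\mH(\nu_t\,|\,\nu^\ast)$ via $\mK^+\mu_t$, drop the nonpositive $-\mH(\nu^\ast\,|\,\mK^+\mu_t)$, use the Gibbs form to reduce the cross term to $\iint K\,d(\mu_t-\mu^\ast)\,d(\nu_t-\nu^\ast)$, and close with Pinsker plus a weighted Young inequality. The only cosmetic difference is that your justification of $\mL_1(\mu)\ge\tau\mH(\mu\,|\,\mu^\ast)$ via $\max_\nu E(\mu,\nu)\ge E(\mu,\nu^\ast)$ is slightly more direct than the paper's, which derives it from the convexity of $\mu\mapsto\log Z^+(\mu)$; and for part~(ii) you correctly identify that the bounds come out in terms of $\widetilde{\mL}(\mu_0,\nu_0)$ (as in the paper's appendix proof) rather than $\mL(\mu_0,\nu_0)$ as the theorem statement literally reads.
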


\subsection{Convergence of annealed Mean-Field GDA}\label{sec:SA}
In this section, we proceed to presenting the convergence of the ``annealed'' Mean-Field GDA dynamics 
\begin{equation}
    \label{eq:anealedWGDA}
\begin{aligned}
\partial_t \mu_t & = \nabla \cdot \left(\mu_t \int_{\mY} \nabla_x K(x,y)d\nu_t(y)\right) +\tau_t \Delta \mu_t,\\
\partial_t \nu_t & = \eta_t\left(-\nabla \cdot \left(\nu_t \int_{\mX} \nabla_y K(x,y)d\mu_t(x)\right) + \tau_t \Delta \nu_t\right),
\end{aligned}
\end{equation}
where $\tau_t>0$ is now a time-dependent temperature which shrinks in time. Given any initial condition $(\mu_0,\nu_0) \in \mP(\mX)\times \mP(\mY)$, the  existence of uniqueness of the global solution to \eqref{eq:anealedWGDA} follow directly from the classical  well-posedness of the theory of nonlinear Mckean-Vlasov-type PDEs \cite{funaki1984certain,sznitman1991topics}.  Our goal is to show that by carefully choosing  the cooling schedule $\tau_t$ and the time-scale ratio $\eta_t$ the solution  $(\mu_t,\nu_t)$ to the annealed dynamics \eqref{eq:anealedWGDA}  converges to the MNE of $E_0$.

 Let $(\mu_{\tau}^\ast, \nu_{\tau}^\ast)$ be the solution of \eqref{eq:fixedpoint} corresponding to temperature $\tau$. Recall the Nikaid\` o-Isoda defined by \eqref{eq:NIerror}.

\begin{theorem}\label{thm:sa1}
Let Assumption \ref{assK} and Assumption \ref{assS} hold. Let $(\mu_t,\nu_t)$ be the solution to \eqref{eq:anealedWGDA} with an initial condition $(\mu_0,\nu_0) \in \mP(\mX)\times \mP(\mY)$.  Assume that the log-Sobolev constant $\lambda_{LS} = \lambda_{LS}(\tau) \geq C_0 e^{-\xi^\ast/\tau}$ for some $\xi^\ast,C_0>0$.

\textbf{(i) Fast ascent regime:}  Assume further that $\tau_t$ is smooth, decreasing in $t$ and for some $\xi>\xi^\ast$, $\tau_t  = \xi/\log t$ for large values of $t$.  Set $
\eta_t =  \frac{M }{(\log t)^2}t^{\xi^\ast/\xi}
$ for some large $M>0$. 
Then for every $0<\epsilon < 1-\xi^\ast/\xi$, there exists $C, C^\prime>0$ such that for $t$ sufficiently large,  
\begin{equation}\label{eq:Lmutstar}
\mH(\mu_t | \mu_{\tau_t}^\ast) \leq C t^{-( 1 - \xi^\ast/\xi-\epsilon  )} , \qquad \mH(\nu_t | \nu_{\tau_t}^\ast) \leq C t^{-( 1 - \xi^\ast/\xi -\epsilon )} ,
\end{equation}
and that 
\begin{equation}\label{eq:NImut}
    0\leq  \NI(\mu_t, \nu_t)  \leq  \frac{C^\prime \log \log t}{\log t}. 
\end{equation}

\textbf{(ii) Fast descent regime:} Assume further that $\tau_t$ is smooth, decreasing in $t$ and for some $\xi>2\xi^\ast$, $\tau_t  = \xi/\log t$ for large values of $t$.  Set  $\eta_t = \frac{\log t}{M t}$ for some large M. Then for every $0<\epsilon < 1-2\xi^\ast/\xi$, there exists $C, C^\prime>0$ such that for $t$ sufficiently large,  
\begin{equation}\label{eq:Lmutstar2}
\mH(\mu_t | \mu_{\tau_t}^\ast) \leq C t^{-( 1 -  2\xi^\ast/\xi -\epsilon)}, \qquad \mH(\nu_t | \nu_{\tau_t}^\ast) \leq C t^{-( 1 -2 \xi^\ast/\xi-\epsilon )},
\end{equation}
and that 
\begin{equation}\label{eq:NImut2}
    0\leq  \NI(\mu_t, \nu_t)  \leq  \frac{C^\prime\log \log t}{\log t}. 
\end{equation}
\end{theorem}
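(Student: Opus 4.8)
The plan is to freeze the temperature at its instantaneous value $\tau_t$, re-run the fixed-temperature analysis of Theorems~\ref{thm:main1}--\ref{thm:main2} pointwise in time, treat the drift of $\tau_t$ as a controlled perturbation, and finally convert the resulting relative-entropy decay into a Nikaid\`o--Isoda bound by comparing $E_{\tau_t}$ with $E_0$. I describe the fast-ascent case (i) in detail; case (ii) is the mirror image with $\mL$ and $\widetilde{\mL}$, $\mu$ and $\nu$ interchanged, and with the dissipation rate $\lambda_{LS}$ replaced by $\lambda_{LS}^2$, which is exactly why the hypothesis there is $\xi>2\xi^\ast$ rather than $\xi>\xi^\ast$. \emph{Step 1 (time-dependent Lyapunov inequality).} Let $\mL^{\tau}$ be the Lyapunov function \eqref{eq:mLtilde} built from $E_\tau$, and consider $t\mapsto\mL^{\tau_t}(\mu_t,\nu_t)$ along \eqref{eq:anealedWGDA}. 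Its derivative decomposes into the ``frozen'' dissipation $\frac{d}{ds}\big|_{s=t}\mL^{\tau_t}(\mu_s,\nu_s)$ plus the explicit temperature term $\dot\tau_t\,\partial_\tau\mL^{\tau}(\mu_t,\nu_t)\big|_{\tau=\tau_t}$. The frozen term is exactly what is estimated in the proof of Theorem~\ref{thm:main1}(i); that estimate is pointwise in time and uses only Lemma~\ref{lem:LSI}, the bound $K_{xy}$, the diameters, and that $\eta$ exceeds the fast-ascent threshold $\gtrsim\lambda_{LS}/(\kappa^2\diam(\mY)^2)$. Using $\lambda_{LS}(\tau_t)\to0$ and $\tau_t=\xi/\log t$ one checks the prescribed $\eta_t=M(\log t)^{-2}t^{\xi^\ast/\xi}$ stays above that threshold for all large $t$, so the frozen term is $\le-\alpha_1(\tau_t)\mL^{\tau_t}(\mu_t,\nu_t)$ with $\alpha_1(\tau_t)\asymp\lambda_{LS}(\tau_t)\ge C_0 t^{-\xi^\ast/\xi}$. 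For the temperature term, Danskin's envelope theorem gives
$$
\partial_\tau\mL^\tau(\mu,\nu)=-\mH(\mu)-\gamma\mH(\nu)+\big[(1+\gamma)\mH(\mK^+\mu)+\mH(\mu_\tau^\ast)-\mH(\nu_\tau^\ast)\big].
$$
The bracketed Gibbs-type entropies are $O(\tau^{-1}\|K\|_\infty+\log\mathrm{vol})$ by direct density bounds. The dangerous terms $-\mH(\mu_t),-\mH(\nu_t)$ (whose naive bounds diverge) are absorbed by $\mL^{\tau_t}$ itself: the identity $\mL_2(\mu,\nu)=\tau\,\mH(\nu\,|\,\mK^+\mu)$ and Gibbs bounds give $-\mH(\nu_t)\le\mL^{\tau_t}(\mu_t,\nu_t)/(\gamma\tau_t)+C/\tau_t$, while the entropy-sandwich inequality $\tau\,\mH(\mu\,|\,\mu_\tau^\ast)\le\mL_1(\mu)$ of Lemma~\ref{lem:bdL} gives $-\mH(\mu_t)\le\mL^{\tau_t}(\mu_t,\nu_t)/\tau_t+C/\tau_t$. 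Altogether
$$
\frac{d}{dt}\mL^{\tau_t}(\mu_t,\nu_t)\ \le\ -\alpha_1(\tau_t)\,\mL^{\tau_t}(\mu_t,\nu_t)\ +\ \frac{C|\dot\tau_t|}{\tau_t}\big(1+\mL^{\tau_t}(\mu_t,\nu_t)\big).
$$

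\emph{Step 2 (solve the ODE, transfer to relative entropy).} Since $|\dot\tau_t|/\tau_t\asymp1/(t\log t)$ while $\alpha_1(\tau_t)\gtrsim t^{-\xi^\ast/\xi}$ with $\xi>\xi^\ast$, for $t$ large the net coefficient exceeds $\tfrac12\alpha_1(\tau_t)$, so with $u_t:=\mL^{\tau_t}(\mu_t,\nu_t)$ one has $\dot u_t\le-\tfrac12\alpha_1(\tau_t)u_t+C/(t\log t)$. Comparing with the quasi-stationary supersolution $\bar u_t=c\,[C/(t\log t)]/[\tfrac12\alpha_1(\tau_t)]$ (the homogeneous part contributes only a stretched-exponentially small term) yields $\mL^{\tau_t}(\mu_t,\nu_t)\lesssim t^{\xi^\ast/\xi-1}/\log t$. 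Plugging this into the pointwise entropy--Lyapunov bounds underlying Theorem~\ref{thm:main2}(i), namely $\tau\,\mH(\mu\,|\,\mu_\tau^\ast)\lesssim\mL^\tau(\mu,\nu)$ and $\tau\,\mH(\nu\,|\,\nu_\tau^\ast)\lesssim(1+\tau^{-2})\mL^\tau(\mu,\nu)$, gives \eqref{eq:Lmutstar} (the displayed $\log$ powers being a comfortable over-estimate).

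\emph{Step 3 (Nikaid\`o--Isoda error).} Writing $\NI(\mu,\nu)=\max_y\int K(\cdot,y)\,d\mu-\min_x\int K(x,\cdot)\,d\nu$, split
$$
\NI(\mu_t,\nu_t)\ \le\ \NI(\mu_{\tau_t}^\ast,\nu_{\tau_t}^\ast)+\|K\|_\infty\big(\|\mu_t-\mu_{\tau_t}^\ast\|_{TV}+\|\nu_t-\nu_{\tau_t}^\ast\|_{TV}\big).
$$
By Pinsker and Step~2 the second term is $O(t^{-(1-\xi^\ast/\xi)/2}\,\mathrm{polylog}\,t)\to0$. For the first term --- the regularization bias --- I would compare $E_{\tau}$ with $E_0$ via the soft-max estimate $|\tau\log\int e^{g/\tau}-\max g|\le C\tau\log(1/\tau)$, valid because $g:=\int K(\cdot,y)\,d\mu_\tau^\ast$ is Lipschitz so that $\{g\ge\max g-\tau\}$ has volume $\gtrsim\tau^{\dim\mY}$; combined with $\mH(\mu_\tau^\ast),\mH(\nu_\tau^\ast)\le\log\mathrm{vol}$ and the fact that $\mu_\tau^\ast,\nu_\tau^\ast$ realize the $E_\tau$-saddle, this yields $\NI(\mu_{\tau_t}^\ast,\nu_{\tau_t}^\ast)\le C\tau_t\log(1/\tau_t)\asymp\log\log t/\log t$ --- the source of the $\log\log t$ in the statement. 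Adding the two bounds proves \eqref{eq:NImut}; \eqref{eq:Lmutstar2}--\eqref{eq:NImut2} follow identically with $\widetilde{\mL}$ in place of $\mL$ and the quadratic rate $\alpha_2(\tau_t)\asymp\lambda_{LS}(\tau_t)^2$.

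\emph{Main obstacle.} The crux is Step~1: one must show $\partial_\tau\mL^{\tau}$ evaluated along the flow is controlled by $\mL^{\tau}$ up to an explicit $\tau^{-1}$ factor --- the naive estimates of $-\mH(\mu_t),-\mH(\nu_t)$ diverge, so the argument has to route through the entropy-sandwich inequalities and Gibbs density bounds --- and simultaneously verify that the dissipation proof of Theorem~\ref{thm:main1} survives verbatim with the time-varying pair $(\tau_t,\eta_t)$ remaining in the correct regime. The remaining quantitative tension, between the vanishing dissipation rate $\lambda_{LS}(\tau_t)\gtrsim t^{-\xi^\ast/\xi}$ and the perturbation size $|\dot\tau_t|/\tau_t\asymp1/(t\log t)$, is won precisely when $\xi>\xi^\ast$ (resp.\ $\xi>2\xi^\ast$).
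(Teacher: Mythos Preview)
Your proposal is correct and follows essentially the same three-step architecture as the paper: (1) derive a time-dependent differential inequality for the Lyapunov function $\mL^{\tau_t}$ along the annealed flow, with the temperature drift entering as a controlled source of order $|\dot\tau_t|/\tau_t\asymp 1/(t\log t)$; (2) integrate against the dissipation rate $\alpha(\tau_t)\gtrsim t^{-\xi^\ast/\xi}$; (3) transfer to the Nikaid\`o--Isoda error by the TV/Pinsker comparison with $(\mu_{\tau_t}^\ast,\nu_{\tau_t}^\ast)$ plus the Laplace/soft-max bias bound (the paper invokes this as Lemma~\ref{lem:laplace}).

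The only substantive technical difference is how the temperature-drift term is obtained. You compute $\partial_\tau\mL^\tau$ via Danskin's envelope theorem, which produces the raw entropies $-\mH(\mu_t),-\mH(\nu_t)$ and then re-absorbs them into $\mL^{\tau_t}$ through the sandwich inequalities of Lemma~\ref{lem:bdL}. The paper instead differentiates $\mE_1(\mu_t)$ and $\mE_1(\mu_{\tau_t}^\ast)$ directly in the representation $\mE_1(\mu)=-\tau\mH(\mu)+\tau\log Z^+(\mu)$; the optimality condition $\partial_u\mE_1(\mu_\tau^\ast)=0$ kills the $\partial_\tau\mu_\tau^\ast$ contribution, and after subtraction the entropy terms cancel against each other in a form that yields $\frac{\tau_t'}{\tau_t}(\mL_1(\mu_t)+2\|K\|_\infty)$ directly, without the absorption step. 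Both routes land on the same inequality $\frac{d}{dt}\mL\le -\alpha_t\mL+C/(t\log t)$; your Danskin route is conceptually cleaner, while the paper's direct computation avoids the detour through Gibbs density bounds and gives the constant $2\|K\|_\infty$ explicitly. One minor slip: the fast-ascent threshold you quote should be $\eta\gtrsim\kappa^2\diam(\mY)^2\lambda_{LS}\sim K_{xy}^2/(\tau^2\lambda_{LS})$, not $\lambda_{LS}/(\kappa^2\diam(\mY)^2)$, though this does not affect the argument.
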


\section{Applications}\label{sec:application}
In this section, we discuss briefly applications of our theoretical results in  training of GANs and adversarial learning of PDEs. 

\subsection{Training of GANs}\label{sec:gan}
 Let $\mu_m$ be the empirical measure associated to the  i.i.d. samples $\{x_i\}_{i=1}^m \in \mX$ from a target measure $\mu\in \mP(\mX)$. Let $D_{\mF}$ be an IPM on the space of probability measures $\mP(\mX)$ parameterized by a set $\mF$ of discriminators, i.e. 
$$
D_{\mF}(\mu, \nu) := \sup_{f\in \mF} \int f d\mu - \int fd\nu. 
$$
IPM-based GAN learns an optimal $\mu$ that minimizes  $D_{\mF} (\mu, \mu_m)$ over $\mP(\mX)$ :
\begin{equation}\label{eq:minmaxgan}
   \inf_{\mu \in \mP(\mX)} D_{\mF}(\mu, \mu_m) =  \inf_{\mu \in \mP(\mX)} \sup_{f\in \mF} \int f d\mu - \int fd\mu_m.  
\end{equation}
Consider the witness function class $\mF$ given by the unit ball of Barron space $\mathcal{B}$ which consists of functions admitting the representation 
$$
f(x) =  \int_{\mY} a \sigma(b\cdot x + c)d\nu(y),
$$
where $y = (a,b,c) \in \mY$ and $\nu$ is a probability measure on the parameter space $\mY$. Observe that Barron functions arise as natural infinite-width limit of two-layer neural networks with a  dimension-free rate  \cite{bach2017breaking,ma2022barron,barron1993universal}.  When the activation function satisfies that $\sup_{x} |a \sigma(b\cdot x+c)| \leq \phi(y)$ for some nonnegative function $\phi$ and for all $y\in \mY$, the Barron norm $\|f\|_{\mB}$ is defined by 
\begin{equation} \label{eq:barronnorm}
\|f\|_{\mB} := \inf_{\nu} \Big\{ \int_{\mY} \phi(y)\mu(dy) \ \Big|\ 
f(x) =  \int_{\mY} a \sigma(b\cdot x + c)d\nu(y) \Big\}.
\end{equation}
Setting $\mF = \{f\in \mB \ |\ \|f\|_{\mB} \leq 1 \}$ in \eqref{eq:minmaxgan} leads to 
\begin{equation} \label{eq:ganobj}
    \inf_{\mu\in \mP(\mX)}  \sup_{\nu\in \mP(\mY)} \int_{\mX}\int_{\mY}K(x,y) \mu(dx) \nu(dy), \text{ where } K(x,y) = \Sigma(x,y) - \int_{\mX} \Sigma(x,y) \mu_m(dx)+ \phi(y).
\end{equation}
Here we adopted the short-notation $\Sigma(x,y) = a \sigma(b\cdot x + c)$ with $y = (a,b,c) \in \mY$  in the above. 
Assume that the activation function $\sigma \in C^2(\R)$, and the input space $\mX$ and the parameter space satisfy the Assumption \ref{assS}. Then it is straightforward to see that  $K\in C^2(\mX \times \mY)$ and there exists $C_\sigma < \infty$ such that for any multi-indices $\mathbf{i}$ and $\mathbf{j}$ with $0\leq |\mathbf{i}|+|\mathbf{j}| \leq 2$,  
$$
\|\nabla_{x}^\mathbf{i} \nabla^\mathbf{j}_y K(x,y)\|_{\infty} \leq 2\|\nabla_{x}^\mathbf{i} \nabla^\mathbf{j}_y \Sigma\|_{\infty} + \|\nabla^{\mathbf{j}}_y \phi\| \leq C_{\sigma}.
$$
Therefore the convergence results in Theorem \ref{thm:main1}-\ref{thm:main2} for the Mean-Field GDA hold for the entropy-regularization of the GAN objective defined in \eqref{eq:ganobj}. Moreover, Theorem \ref{thm:sa1} implies that the annealed GDA dynamics finds the MNE of the unregularized GAN objective. 

\subsection{Adversarial learning of PDEs} We provide another usage of our results in adversarial learning of PDEs. To demonstrate the idea, we focus on a simple linear elliptic PDE on a bounded Lipschitz domain $\mZ\subset \R^d$ equipped with the Neumann boundary condition
$$
\begin{aligned}
    -\Delta u(z)  + Vu(z) & = f(z), z\in \mZ,\\
    \partial_{\nu} u(z) & = 0,  z\in \partial \mZ.
\end{aligned}
$$
Assume that $0< V_{\min} \leq V\leq  V_{\max} < \infty$ and $f\in (H^1(\mZ))^{*}$. The weak solution $u \in H^1(\mZ)$ satisfies that 
\begin{equation}\label{eq:weakPDE}
    \int_{\mZ} \nabla u \cdot \nabla v+  V u v dz =  \int_{\mZ} f v dz, \forall v\in H^1(\mZ).
\end{equation}
We seek an approximate solution to \eqref{eq:weakPDE} in the framework of Petrov-Galerkin \cite{Petrov40,mitchell1980finite} where we choose the spaces of trial functions and test functions as two different Barron functions. More precisely, consider a trial function $u(z)  \in \mU  := \{u\in \mB_1\ |\ \|u\|_{\mB_1} \leq 1\}$ and a test function $v \in \mV  := \{v\in \mB_2\ |\ \|v\|_{\mB_2} \leq 1\}$, where $\mB_i,i=1,2$ are Barron spaces defined in Section \ref{sec:gan} with activation function $\sigma_i$ and  Barron norm $\|\cdot\|_{\mB_i}$ defined in \eqref{eq:barronnorm} with $\phi$ replaced by nonnegative weight functions $\phi_i$. We look for a solution $u \in \mU$ parameterized by some probability measure $\mu \in \mP(\mX)$,
$$
u(z) = \int_{\mX} a_1\sigma_1(b_1\cdot z + c_1) \mu(dx)
$$
with $x = (a_1,b_1,c_1) \in \mX$ satisfying equation \eqref{eq:weakPDE}  for any $v\in \mV$ with $\|v\|_{\mB_2} \leq 1$ parameterized by $\nu\in \mP(\mY)$ such that 
$$
v(z) = \int_{\mY} a_2\sigma_2(b_2\cdot z + c_2) \nu(dy).
$$
Putting these into the weak formulation \eqref{eq:weakPDE} leads to 
$$
\inf_{\mu\in \mP(\mX)} \sup_{\nu\in \mP(\mY)} \int_{\mX} \int_{\mY} K(x,y) \mu(dx)\nu(dy),
$$
where the potential $K(x,y)$ is given for $x=(a_1,b_1,c_1), y=(a_2,b_2,c_2)$ by 
$$\begin{aligned}
    K(x,y)& = \int_{\mZ} \Big(a_1 a_2 b_1 \cdot b_2  \sigma^\prime_1(b_1\cdot z+ c_1)\sigma^\prime_2(b_2\cdot z+ c_2) + V(z)a_1 a_2\sigma_1(b_1\cdot z + c_1) \sigma_2(b_2\cdot z + c_2)\\
    & \qquad -f(z) a_2\sigma_2(b_2\cdot z + c_2)\Big) dz - \phi_1(x) + \phi_2(y).
\end{aligned}
$$
Assume that the activation functions $\sigma_i \in C^2(\R)$ and the parameter spaces $\mX$ and $\mY$ satisfy  Assumption \ref{assS}. Assume also that $\phi_1\in C^2(\mX)$ and $\phi_2 \in C^2(\mY)$. Then it is easy to verify that $K\in C^2(\mX\times \mY)$ and that $
\|K\|_{C^2}\leq C
$ for some constant $C>0$ depending on $\sigma_i, \phi_i, \mX, \mY, V$ and $f$. Hence the convergence results on Mean-Field GDA and its annealed version established in Section \ref{sec:main} apply to this problem.  

\section{Proofs of Main Results} \label{sec:proofs}

\subsection{Proof of convergence for Mean-Field GDA with a fixed temperature}
We only present the proof of our main convergence results in the fast ascent regime. The proofs for the fast descent regime can be carried out in a similar manner and are provided in the Appendix \ref{sec:appmfgda} for completeness.

We first state a lemma below summarizing some  important properties on the functionals $\mL_i,i=1,\cdots,4$. 
defined in  \eqref{eq:mLs}.

\begin{lemma}\label{lem:bdL}
For any $\mu\in \mP(\mX)$ and $\nu \in \mP(\mY)$, the following hold 

\begin{align}\label{eq:ml2}
\mL_2 (\mu,\nu) & =\tau \mH(\nu | \mK^+ (\mu)),\\
\label{eq:ml4}
\mL_4 (\mu,\nu) & =\tau \mH(\mu | \mK^- (\nu)),\\
\label{eq:ml1}
\tau\mH(\mu | \mu^\ast) \leq \mL_1 (\mu) & \leq \tau\mH(\mu | \mK^-(\mK^+ (\mu))),\\
\label{eq:ml3}
\tau\mH(\nu | \nu^\ast)\leq \mL_3 (\nu) & \leq \tau\mH(\nu | \mK^+(\mK^- (\nu))).
\end{align}
\end{lemma}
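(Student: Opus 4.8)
The plan is to prove the two identities \eqref{eq:ml2}--\eqref{eq:ml4} first by an explicit Gibbs–variational computation, and then to obtain the two sandwich estimates \eqref{eq:ml1} and \eqref{eq:ml3} as consequences of those identities together with the minmax theorem \eqref{eq:minmaxthm} and the fixed-point characterization \eqref{eq:fixedpoint}.

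For \eqref{eq:ml2}, fix $\mu$ and set $V_\mu(y) := \int_\mX K(x,y)\,d\mu(x)$, so that $E(\mu,\nu) = \int_\mY V_\mu\,d\nu + \tau\mH(\nu) - \tau\mH(\mu)$ and, by \eqref{eq:mK2}, $\mK^+\mu$ is the Gibbs measure with $\log\frac{d\mK^+\mu}{dy} = \tau^{-1}V_\mu - \log Z^+(\mu)$. Expanding $\tau\mH(\nu\mid\mK^+\mu)$ gives $\tau\mH(\nu\mid\mK^+\mu) = -\tau\mH(\nu) - \int V_\mu\,d\nu + \tau\log Z^+(\mu)$; evaluating this same formula at $\nu=\mK^+\mu$, where the left-hand side vanishes, identifies the normalizing constant as $\tau\log Z^+(\mu) = \tau\mH(\mK^+\mu)+\int V_\mu\,d(\mK^+\mu) = E(\mu,\mK^+\mu)+\tau\mH(\mu)$. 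Substituting back and using the formula for $E(\mu,\nu)$ collapses everything to $\tau\mH(\nu\mid\mK^+\mu) = E(\mu,\mK^+\mu)-E(\mu,\nu) = \mL_2(\mu,\nu)$. Identity \eqref{eq:ml4} is obtained identically after fixing $\nu$, writing $W_\nu(x):=\int_\mY K(x,y)\,d\nu(y)$ and using $\log\frac{d\mK^-\nu}{dx} = -\tau^{-1}W_\nu - \log Z^-(\nu)$; the sign flip in the exponent is exactly matched by the fact that $\mK^-\nu$ now realizes the \emph{minimum} of $E(\cdot,\nu)$, so that again $\tau\mH(\mu\mid\mK^-\nu) = E(\mu,\nu) - \min_{\mu'}E(\mu',\nu) = \mL_4(\mu,\nu)$.

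For \eqref{eq:ml1}, note first that by \eqref{eq:minmaxthm} and \eqref{eq:fixedpoint} we have $\min_\mu\max_\nu E(\mu,\nu) = E(\mu^\ast,\nu^\ast)$, hence $\mL_1(\mu) = \max_\nu E(\mu,\nu) - E(\mu^\ast,\nu^\ast)$. The lower bound follows by applying \eqref{eq:ml4} with $\nu=\nu^\ast$ and using $\mu^\ast=\mK^-\nu^\ast$: this gives $\tau\mH(\mu\mid\mu^\ast) = E(\mu,\nu^\ast) - E(\mu^\ast,\nu^\ast) \le \max_\nu E(\mu,\nu) - E(\mu^\ast,\nu^\ast) = \mL_1(\mu)$. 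The upper bound follows by applying \eqref{eq:ml4} with $\nu=\mK^+\mu$: since $E(\mu,\mK^+\mu)=\max_\nu E(\mu,\nu)$, we get $\tau\mH(\mu\mid\mK^-(\mK^+\mu)) = \max_\nu E(\mu,\nu) - \min_{\mu'}E(\mu',\mK^+\mu)$, and the elementary inequality $\min_{\mu'}E(\mu',\mK^+\mu)\le \sup_{\nu'}\min_{\mu'}E(\mu',\nu') = E(\mu^\ast,\nu^\ast)$ shows the right-hand side is $\ge \mL_1(\mu)$. Estimate \eqref{eq:ml3} is the mirror image: one writes $\mL_3(\nu) = E(\mu^\ast,\nu^\ast) - \min_\mu E(\mu,\nu)$, gets the lower bound from \eqref{eq:ml2} at $\mu=\mu^\ast$ (using $\nu^\ast=\mK^+\mu^\ast$ and $\max_{\nu'}E(\mu^\ast,\nu')=E(\mu^\ast,\nu^\ast)$), and the upper bound from \eqref{eq:ml2} at $\mu=\mK^-\nu$ together with $\max_{\nu'}E(\mK^-\nu,\nu')\ge \min_{\mu'}\max_{\nu'}E(\mu',\nu') = E(\mu^\ast,\nu^\ast)$.

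None of the steps is deep: the identities are just the Donsker–Varadhan/Gibbs variational principle applied to the one-sided problems, and the sandwich bounds are one application each of the identities plus a minmax inequality. The only place that needs genuine care is the bookkeeping of signs — in particular checking that in \eqref{eq:ml4}/\eqref{eq:ml3} the minimization together with the sign of the entropy term still yields a bona fide nonnegative relative entropy, and that in each of the two sandwich bounds the minmax inequality is invoked in the correct direction (as an upper bound on a $\min$ of $E$ for \eqref{eq:ml1}, and as a lower bound on a $\max$ of $E$ for \eqref{eq:ml3}). One should also record, for the relative entropies to be finite and the computations legitimate, that $\mK^+\mu$, $\mK^-\nu$ and their compositions have densities bounded above and below under Assumptions \ref{assS}--\ref{assK}.
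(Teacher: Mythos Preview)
Your proof is correct. The treatment of the identities \eqref{eq:ml2}--\eqref{eq:ml4} and of the upper bounds in \eqref{eq:ml1} and \eqref{eq:ml3} matches the paper's argument essentially verbatim. The genuine difference lies in the lower bounds $\tau\mH(\mu\mid\mu^\ast)\le\mL_1(\mu)$ and $\tau\mH(\nu\mid\nu^\ast)\le\mL_3(\nu)$. The paper obtains these by first computing the first variation of $\log Z^+(\mu)$, invoking an auxiliary lemma on the convexity of $\mu\mapsto\log Z^+(\mu)$ (proved via H\"older's inequality), and then expanding $\mL_1(\mu)=\mE_1(\mu)-\mE_1(\mu^\ast)$ using the tangent inequality at $\mu^\ast$. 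You instead bypass the convexity lemma entirely: applying the already-proved identity \eqref{eq:ml4} at $\nu=\nu^\ast$ and using the fixed-point relation $\mu^\ast=\mK^-\nu^\ast$ gives $\tau\mH(\mu\mid\mu^\ast)=E(\mu,\nu^\ast)-E(\mu^\ast,\nu^\ast)$ directly, after which a single trivial inequality $E(\mu,\nu^\ast)\le\max_\nu E(\mu,\nu)$ finishes. Your route is shorter and more self-contained; the paper's route makes the underlying convex structure of $\mE_1$ explicit, which is conceptually informative but not logically necessary here.
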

\begin{remark}
The sandwich inequalities \eqref{eq:ml1} and \eqref{eq:ml3} play an essential role in  controlling terms in the entropy production of the Mean-Field GDA dynamics via the Lyapunov functions in order to close the Gr\"onwall argument to obtain the dissipation of the latter along the dynamics. A similar sandwich inequality  appeared in \cite[Lemma 3.4]{chizat2022mean} in the proof of convergence for the  Mean-Field Langevin dynamics. 
\end{remark}

Next, we keep track of the time-derivatives of $\mL_1(\mu_t)$ and $\mL_2(\mu_t,\nu_t)$ in the next proposition whose proof can be found in Appendix \ref{sec:applem}.

\begin{proposition}\label{prop:diffL}
Let $(\mu_t,\nu_t)$ be the solution to the DGA dynamics \eqref{eq:WGDA}. Then 
\begin{equation}
    \begin{aligned}\label{eq:dtL1}
        \frac{d}{dt} \mL_1(\mu_t) \leq -\frac{\tau^2}{2} \mI(\mu_t | \mK^-(\mK^+(\mu_t))) + K_{xy}^2  \diam (\mY)^2\cdot \mH(\nu_t | \mK^+(\mu_t))
    \end{aligned}
\end{equation}
and 
\begin{equation}
    \begin{aligned}\label{eq:dtL2}
        \frac{d}{dt} \mL_2(\mu_t, \nu_t) \leq -\tau^2\eta\mI(\nu_t | \mK^+(\mu_t)) + \frac{\tau^2}{2} \mI(\mu_t | \mK^-(\mK^+(\mu_t))) +   3K_{xy}^2  \diam (\mY)^2 \cdot \mH(\nu_t | \mK^+(\mu_t)).
         \end{aligned}
\end{equation}
\end{proposition}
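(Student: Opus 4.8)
The plan is to reduce everything to differentiating in time the two scalars $F_1(\mu_t):=\max_{\nu\in\mP(\mY)}E(\mu_t,\nu)=E(\mu_t,\mK^+\mu_t)$ and $E(\mu_t,\nu_t)$, since $\mL_1(\mu_t)=F_1(\mu_t)-\min_{\mu\in\mP(\mX)}F_1(\mu)$ (so $\tfrac{d}{dt}\mL_1(\mu_t)=\tfrac{d}{dt}F_1(\mu_t)$) and $\mL_2(\mu_t,\nu_t)=F_1(\mu_t)-E(\mu_t,\nu_t)$. The key structural observation is that \eqref{eq:WGDA} is the coupled Wasserstein gradient descent/ascent flow of $E$: writing the first variations as $\frac{\delta E}{\delta\mu}(\mu,\nu)(x)=\int_{\mY}K(x,y)\,d\nu(y)+\tau\log\frac{d\mu}{dx}(x)$ and $\frac{\delta E}{\delta\nu}(\mu,\nu)(y)=\int_{\mX}K(x,y)\,d\mu(x)-\tau\log\frac{d\nu}{dy}(y)$, the system \eqref{eq:WGDA} reads $\partial_t\mu_t=\nabla\cdot\!\big(\mu_t\nabla\tfrac{\delta E}{\delta\mu}(\mu_t,\nu_t)\big)$ and $\partial_t\nu_t=-\eta\,\nabla\cdot\!\big(\nu_t\nabla\tfrac{\delta E}{\delta\nu}(\mu_t,\nu_t)\big)$; moreover, by the definition \eqref{eq:mK} of $\mK^{\pm}$ one has, modulo additive constants, $\tfrac{\delta E}{\delta\mu}(\mu,\mK^+\mu)=\tau\log\tfrac{d\mu}{d(\mK^-\mK^+\mu)}$ and $\tfrac{\delta E}{\delta\nu}(\mu,\nu)=-\tau\log\tfrac{d\nu}{d(\mK^+\mu)}$, so the ``self-consistent'' dissipation terms produced below are exactly ($\tau$-dependent multiples of) the relative Fisher informations $\mI(\mu_t\,|\,\mK^-\mK^+\mu_t)$ and $\mI(\nu_t\,|\,\mK^+\mu_t)$ appearing in \eqref{eq:dtL1}--\eqref{eq:dtL2}.

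First I would differentiate $\mL_1$. Since $\mK^+\mu_t$ is the \emph{unique} maximizer of $E(\mu_t,\cdot)$, the first-order optimality (Gibbs) condition forces $y\mapsto\frac{\delta E}{\delta\nu}(\mu_t,\mK^+\mu_t)(y)$ to be constant on $\mY$; hence in $\tfrac{d}{dt}E(\mu_t,\mK^+\mu_t)$ the contribution of $\partial_t(\mK^+\mu_t)$ vanishes (a constant integrated against a signed measure of zero total mass), leaving $\tfrac{d}{dt}\mL_1(\mu_t)=\int_{\mX}\frac{\delta E}{\delta\mu}(\mu_t,\mK^+\mu_t)\,\partial_t\mu_t$. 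Substituting the PDE for $\partial_t\mu_t$ and integrating by parts gives $\tfrac{d}{dt}\mL_1(\mu_t)=-\int_{\mX}a\cdot b\,d\mu_t$ with $a:=\nabla\tfrac{\delta E}{\delta\mu}(\mu_t,\mK^+\mu_t)$ and $b:=\nabla\tfrac{\delta E}{\delta\mu}(\mu_t,\nu_t)$. Then Young's inequality $-a\cdot b=-|a|^2-a\cdot(b-a)\le-\tfrac12|a|^2+\tfrac12|b-a|^2$ isolates $-\tfrac12\int|a|^2\,d\mu_t$, the negative Fisher-information term of \eqref{eq:dtL1}, and it remains to bound $b-a=\int_{\mY}\nabla_xK(x,y)\,\big(d\nu_t(y)-d(\mK^+\mu_t)(y)\big)$. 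Subtracting $\nabla_xK(x,y_0)$ for an arbitrary $y_0\in\mY$ (allowed since $\nu_t$ and $\mK^+\mu_t$ are probability measures) and using Assumption~\ref{assK}, one gets $|b-a|(x)\le K_{xy}\diam(\mY)\,\|\nu_t-\mK^+\mu_t\|_{\mathrm{TV}}$ \emph{uniformly} in $x$; Pinsker's inequality bounds $\|\nu_t-\mK^+\mu_t\|_{\mathrm{TV}}^2$ by a multiple of $\mH(\nu_t\,|\,\mK^+\mu_t)$, which is a multiple of $\mL_2(\mu_t,\nu_t)$ by \eqref{eq:ml2}. Collecting the terms yields \eqref{eq:dtL1}.

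For $\mL_2=F_1(\mu_t)-E(\mu_t,\nu_t)$ I would additionally compute, again from the PDE and integration by parts, $\tfrac{d}{dt}E(\mu_t,\nu_t)=-\int_{\mX}|b|^2\,d\mu_t+\eta\int_{\mY}\big|\nabla\tfrac{\delta E}{\delta\nu}(\mu_t,\nu_t)\big|^2\,d\nu_t$, the last term being a multiple of $\eta\,\mI(\nu_t\,|\,\mK^+\mu_t)$. Subtracting and regrouping $-a\cdot b+|b|^2=b\cdot(b-a)\le\tfrac12|a|^2+\tfrac32|b-a|^2$ leaves the favorably signed ascent contribution $-\eta\,\mI(\nu_t\,|\,\mK^+\mu_t)$, a positive but harmless term $+\tfrac12\,\mI(\mu_t\,|\,\mK^-\mK^+\mu_t)$ (which cancels against the strictly negative term of \eqref{eq:dtL1} once the Lyapunov function $\mL=\mL_1+\gamma\mL_2$ with $\gamma<1$ is formed), and the mismatch term $\tfrac32|b-a|^2$ bounded exactly as before, producing the factor $3$ in front of $\mH(\nu_t\,|\,\mK^+\mu_t)$ in \eqref{eq:dtL2}. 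Keeping track of the numerical constants and of the powers of $\tau$ (the choice of Young's parameter being what fixes them) completes the argument.

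I expect the main obstacle to be not a single inequality but the rigorous justification of these manipulations: one needs the densities of $\mu_t,\nu_t$ to be smooth, strictly positive, and to have integrable logarithmic gradients, which follows from parabolic smoothing for the nonlinear McKean--Vlasov PDE \eqref{eq:WGDA} under the $C^2$ bound on $K$ (Assumption~\ref{assK}) on the compact state spaces of Assumption~\ref{assS}, together with the explicit Gibbs form \eqref{eq:mK} of $\mK^{\pm}$. The envelope/Danskin step, by contrast, is essentially free here, since $\tfrac{\delta E}{\delta\nu}(\mu,\mK^+\mu)$ is \emph{identically} constant in $y$ and no implicit-function argument on the maximizer is needed; and $K_{xy}\diam(\mY)$ may be replaced by $\|\nabla_xK\|_{\infty}$ in the mismatch estimate, as noted in the remark following Theorem~\ref{thm:main1}.
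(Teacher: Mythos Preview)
Your proposal is correct and follows essentially the same route as the paper: the paper also computes $\tfrac{d}{dt}\mL_1(\mu_t)$ via the first variation of $\mE_1(\mu)=-\tau\mH(\mu)+\tau\log Z^+(\mu)$ (your envelope/Danskin step), obtains the cross term $-\int a\cdot b\,d\mu_t$ with the same $a,b$, applies the same Young split, and bounds the mismatch $b-a$ via the mean value theorem and Pinsker exactly as you do; for $\mL_2$ the paper differentiates $\tau\mH(\nu_t\,|\,\mK^+\mu_t)$ directly rather than writing $\mL_2=F_1-E$, but after one integration by parts this yields precisely your term $\int b\cdot(b-a)\,d\mu_t$ and the identical Young inequality $b\cdot(b-a)\le\tfrac12|a|^2+\tfrac32|b-a|^2$, so the two bookkeepings coincide.
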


With Proposition \ref{prop:diffL}, we are ready to present the proof of Theorem \ref{thm:main1} and Theorem \ref{thm:main2}.

\begin{proof}[Proof of Theorem \ref{thm:main1}]
Thanks to Proposition \ref{prop:diffL} and identity \eqref{eq:ml2}, we have
$$
\begin{aligned}
    \frac{d }{dt}\mL (\mu_t, \nu_t) & \leq -\eta\tau^2 \gamma\mI(\nu_t | \mK^+(\mu_t)) -\frac{\tau^2}{2} (1 - \gamma)  \mI(\mu_t | \mK^-(\mK^+(\mu_t)))\\
    & \qquad + \frac{K_{xy}^2 \diam(\mY)^2(1+3\gamma)}{\tau }\mL_2(\mu_t,\nu_t).
\end{aligned}
$$
Observe also from Lemma \ref{lem:LSI} and sandwich inequality \eqref{eq:ml1} that 
$$
\begin{aligned}
    \tau\mI(\mu_t | \mK^-(\mK^+(\mu_t))) & \geq \tau \lambda_{LS} \mH(\mu_t |\mK^-(\mK^+(\mu_t)) )\\
    & \geq  \lambda_{LS} \mL_1(\mu_t).
\end{aligned}
$$
Combining the last two displays leads to 
$$
\begin{aligned}
    \frac{d }{dt}\mL (\mu_t, \nu_t) & \leq -\eta\tau^2 \gamma\mI(\nu_t | \mK^+(\mu_t)) -\frac{\tau}{2} (1 - \gamma) \lambda_{LS}  \mL_1(\mu_t) + \frac{K_{xy}^2 \diam(\mY)^2(1+3\gamma)}{\tau} \mL_2(\mu_t,\nu_t)\\
    & \leq -\eta\tau^2 \gamma \lambda_{LS} \mH(\nu_t | \mK^+(\mu_t)) -\frac{\tau}{2} (1 - \gamma) \lambda_{LS}  \mL_1(\mu_t) + \frac{K_{xy}^2 \diam(\mY)^2(1+3\gamma)}{\tau} \mL_2(\mu_t,\nu_t)\\
    & = -\frac{\tau}{2} (1 - \gamma) \lambda_{LS}  \mL_1(\mu_t) - \tau\Big(\eta \gamma \lambda_{LS} -\frac{K_{xy}^2 \diam(\mY)^2(1+3\gamma)}{\tau^2} \Big)\mL_2(\mu_t,\nu_t).
\end{aligned}
$$
Now for any fixed $\gamma < 1$, we set 
$$
\eta = \frac{2K_{xy}^2 \diam(\mY)^2(1+3\gamma)}{\tau^2 \gamma \lambda_{LS}} = \frac{2\lambda_{LS}\kappa^2\diam(\mY)^2}{\gamma}.
$$
Then it follows from the last inequality that 
$$
\frac{d}{dt} \mL (\mu_t, \nu_t) \leq -\alpha \mL (\mu_t, \nu_t) 
$$
with 
$$\begin{aligned}
\alpha=\tau \lambda_{LS} \Big(\frac{1-\gamma}{2} \wedge  \frac{\kappa^2 \diam(\mY)^2(1+3\gamma)}{\gamma}\Big).
\end{aligned}
$$
\end{proof}

\begin{proof}[Proof of Theorem \ref{thm:main2}]
First, thanks to Theorem \ref{thm:main1}, $\mL(\mu_t,\nu_t) \leq e^{-\alpha t} \mL(\mu_0,\nu_0) $. In particular, for any $t>0$, 
\begin{equation} \label{eq:mL1nu}
\mL_1(\mu_t) \leq e^{-\alpha t} \mL(\mu_0,\nu_0) 
\end{equation}
and 
\begin{equation} \label{eq:Hnu}
\tau\mH (\nu_t | \mK^+(\mu_t)) =  \mL_2(\mu_t, \nu_t) \leq \frac{e^{-\alpha t}}{\gamma}\mL(\mu_0,\nu_0). 
\end{equation}
As a result of \eqref{eq:mL1nu} and \eqref{eq:ml1}, one obtains that 
\begin{equation} \label{eq:Hmu}
\tau \mH (\mu_t |\mu^\ast) \leq  e^{-\alpha t} \mL(\mu_0,\nu_0).
\end{equation}

Next, to obtain the exponential decay of $\mH(\nu_t | \nu^\ast) $, notice first that 
$$\begin{aligned}
 \tau  \mH(\nu_t | \nu^\ast) &  = \tau  \mH(\nu_t | \mK^+(\mu_t)) +  \tau \int_{\mY} (\log (\mK^+(\mu_t)) - \log(\nu^\ast)) d\nu_t\\
 & = \tau  \mH(\nu_t | \mK^+(\mu_t)) +  \tau \int_{\mY} (\log (\mK^+(\mu_t)) - \log(\nu^\ast)) d(\nu_t  - \nu^\ast)  - \tau \mH(\nu^\ast | \mK^+(\nu_t))\\
 & \leq \tau  \mH(\nu_t | \mK^+(\mu_t)) +  \tau \int_{\mY} (\log (\mK^+(\mu_t)) - \log(\nu^\ast)) d(\nu_t  - \nu^\ast).
\end{aligned}
$$
Since $\nu^\ast  = \mK^+(\mu^\ast)$, we have 
$$\begin{aligned}
(\log (\mK^+(\mu_t)) - \log(\nu^\ast)) (y) & = \tau^{-1} \int_{\mX} K(x,y) (\mu_t(dx) - \mu^\ast(dx)) - (\log Z^+(\mu_t) - \log Z^+(\mu^\ast)).
\end{aligned}
$$
It follows from the last two displays that 
\begin{equation}\label{eq:Hmu2}
    \begin{aligned}
 \tau  \mH(\nu_t | \nu^\ast) &  \leq  \tau  \mH(\nu_t | \mK^+(\mu_t))
  + \int_{\mY}  \int_{\mX} K(x,y) (\mu_t(dx) - \mu^\ast(dx))  (\nu_t(dy)  - \nu^\ast(dy)). 
 \end{aligned}
\end{equation}
The last term on the right side above can be upper bounded by 
\begin{equation}
    \begin{aligned}\label{eq:bdmutmu}
\int_{\mY}  \int_{\mX} K(x,y) (\mu_t(dx) - \mu^\ast(dx))  (\nu_t(dy)  - \nu^\ast(dy)) & \leq 
\|K\|_{\infty}\cdot \text{TV} (\mu_t, \mu^\ast) \cdot \text{TV} (\nu_t, \nu^\ast) \\
& \leq  2\|K\|_{\infty} \sqrt{\mH(\mu_t| \mu^\ast)} \cdot \sqrt{\mH(\nu_t| \nu^\ast)}\\
& \leq \frac{\tau}{2} \mH(\nu_t| \nu^\ast) + \frac{2 \|K\|_{\infty}^2}{\tau} \mH(\mu_t| \mu^\ast).
 \end{aligned}
\end{equation}
 where we have used the Pinsker's inequality and Young's inequality in the last two lines above.  Finally combining the last two estimates leads to 
 $$
\begin{aligned}
   \tau  \mH(\nu_t | \nu^\ast)  & \leq 2\tau  \mH(\nu_t | \mK^+(\mu_t)) + \frac{4 \|K\|_{\infty}^2}{\tau} \mH(\mu_t| \mu^\ast)\\
   & \leq \Big(\frac{2}{\gamma} + \frac{4 \|K\|_{\infty}^2}{\tau^2}\Big)e^{-\alpha t}\mL(\mu_0,\nu_0),
\end{aligned}
 $$
 where we have used inequalities \eqref{eq:Hnu} and \eqref{eq:Hmu} in the last inequality above.

\end{proof}

\subsection{ Proof of convergence for the annealed dynamics}
Recall that the MNE of entropy-regularized objective $E_\tau$ is given by $(\mu^\ast_\tau, \nu^\ast_\tau)$ characterized by \eqref{eq:mne}.  Note that we emphasized the dependence of the objective and the optimizer on the temperature $\tau$ since the later will be time-dependent throughout this section. 

It will be useful to define energies $\mE_i, i=1,2$ as follows. 
\begin{equation}\label{eq:mK3}\begin{aligned}
    \mE_1(\mu) & :=   
\max_{\nu\in \mP(\mY)} E_\tau(\mu,\nu)  = -\tau \mH(\mu) + \tau\log Z^+(\mu),\\
    \mE_2(\nu) & := \min_{\mu\in \mP(\mX)} E_\tau(\mu, \nu)  = \tau \mH(\nu) -\tau\log Z^-(\nu).
\end{aligned}
\end{equation}
It is clear that $\mL_1(\mu) = \mE_1(\mu) - \mE_1(\mu^\ast)$.
\begin{proof}[Proof of Theorem \ref{thm:sa1}]
Similar to the last section, we only present here the proof for the fast ascent regime. The proof for the other regime can be carried out analogously which for completeness is provided in Appendix \ref{sec:appsa}.  The  proof of the entropy decays in \eqref{eq:Lmutstar} follows largely the proof of Theorem \ref{thm:main1} and is also inspired by the proof of \cite[Theorem 4.1]{chizat2022mean}. 

\textbf{Step 1.} Bounding $\mL_1(\mu_t) = \mE_1(\mu_t) - \mE_1(\mu_{\tau_t}^\ast)$. First, let us compute the time-derivative $\frac{d}{dt} \mE_1(\mu_{\tau_t}^\ast)$.  
Note that since            
$$
\mE_1(\mu) = -\tau \mH(\mu) + \tau \log Z^+(\mu),
$$
we have for $\tau>0$, 
$$
\begin{aligned}
\frac{d}{d\tau} \mE_1(\mu_{\tau}^\ast) & = -\mH(\mu_\tau^\ast) + \log Z^+(\mu_\tau^\ast)\\
&\qquad + \tau\Big(\langle \log \mu_{\tau}^\ast, \partial_{\tau} \mu_\tau^\ast \rangle + \int_{\mX}\int_{\mY} \mK^+(\mu^\ast_\tau)(y) K(x,y)\frac{d}{d\tau} (\tau^{-1}  \mu^\ast_\tau(x)) dy dx \Big).
\end{aligned}
$$
Moreover, 
$$
\begin{aligned}
   & \tau \int_{\mX}\int_{\mY} \mK^+(\mu^\ast_\tau)(y) K(x,y)\frac{d}{d\tau} (\tau^{-1}  \mu^\ast_\tau(x)) dy dx  = - \int_{\mX}\int_{\mY} \mK^+(\mu^\ast_\tau)(y) \tau^{-1}  K(x,y) dy \mu^\ast_\tau(x) dx
   \\
   & \qquad + \tau\int_{\mX}\int_{\mY} \mK^+(\mu^\ast_\tau)(y) \tau^{-1}  K(x,y) dy \partial_{\tau} \mu^\ast_\tau(dx) \\
   & = - \int_{\mX}\int_{\mY} \mK^+(\mu^\ast_\tau)(y) \tau^{-1}  K(x,y) dy \mu^\ast_\tau(x) dx -\tau \langle \log \mK^-(\mK^+(\mu_{\tau}^\ast)) , \partial_{\tau} \mu^\ast_\tau\rangle 
\end{aligned}
$$
Combining the last two identities leads to 
$$
   \begin{aligned}
\frac{d}{d\tau}\mE_1(\mu_{\tau}^\ast) & = -\mH(\mu_\tau^\ast) + \log Z^+(\mu_\tau^\ast)  
- \int_{\mX}\int_{\mY} \mK^+(\mu^\ast_\tau)(y) \tau^{-1}  K(x,y) dy \mu^\ast_\tau(x) dx\\
& + \underbrace{\tau 
\langle\log \mu_{\tau}^\ast- \log \mK^-(\mK^+(\mu_{\tau}^\ast)), \partial_{\tau} \mu^\ast_\tau\rangle }_{ = \langle \partial_u \mE_1^\tau(u)|_{u= \mu_{\tau}^\ast}, \ \partial_{\tau} \mu^\ast_\tau\rangle = 0}\\
& = \tau^{-1} \mE_1 (\mu^\ast_\tau) - \int_{\mX}\int_{\mY} \mK^+(\mu^\ast_\tau)(y) \tau^{-1}  K(x,y) dy \mu^\ast_\tau(x) dx .
\end{aligned} 
$$
Consequently, we have 
 \begin{equation}\begin{aligned}\label{eq:dh}
\frac{d}{dt}\mE_1(\mu_{\tau_t}^\ast)  & = \tau^{\prime}_t \frac{d}{d\tau}\mE_1(\mu_{\tau}^\ast)\Big|_{\tau  = \tau_t}\\
& =\tau^{\prime}_t \Big( \tau_t^{-1} \mE_1 (\mu^\ast_{\tau_t}) - \int_{\mX}\int_{\mY} \mK^+(\mu^\ast_{\tau_t})(y) \tau_t^{-1}  K(x,y) dy \mu^\ast_{\tau_t}(x) dx \Big).
\end{aligned} \end{equation}
Next, a similar calculation as above applied to $\mE_1 (\mu_t)$ gives 
\begin{equation}\label{eq:dE1tau}
\begin{aligned}
    \frac{d}{dt} \mE_1 (\mu_t) & = \tau^\prime_t \Big(\tau_t^{-1}\mE_1 (\mu_t) - \int_{\mX}\int_{\mY} \mK^+(\mu_t)(y) \tau_t^{-1}  K(x,y) dy \mu_t(x) dx\Big)\\
    & \qquad +  \tau_t \int_{\mX} \log \Big(\frac{d\mu_t}{d \mK^-(\mK^+(\mu_t))}\Big) \cdot \partial_t \mu_t (dx)\\
    & = \tau^\prime_t \Big(\tau_t^{-1}\mE_1 (\mu_t)  -\int_{\mX}\int_{\mY} \mK^+(\mu_t)(y) \tau_t^{-1}  K(x,y) dy \mu_t(x) dx\Big)\\
    & \qquad -  \tau_t^2 \int_{\mX} \nabla_x \log \Big(\frac{d\mu_t}{d \mK^-(\mK^+(\mu_t))}\Big) \cdot \nabla_x \log \Big(\frac{d\mu_t}{d \mK^-(\nu_t)}\Big) \mu_t(dx).
\end{aligned}    
\end{equation}
As a result of \eqref{eq:dE1tau} and \eqref{eq:dh}, 
\begin{equation}\label{eq:dE1tau2}
\begin{aligned}
 & \frac{d}{dt}\mL_1(\mu_t)  =   \frac{d}{dt} (\mE_1(\mu_t) - \mE_1 (\mu^\ast_{\tau_t}))\\
 & = \frac{\tau_t^\prime}{\tau_t} \Big[\mL_1(\mu_t) - \Big(\int_{\mX}\int_{\mY} \mK^+(\mu_t)(y)  K(x,y) dy \mu_t(x) dx - \int_{\mX}\int_{\mY} \mK^+(\mu^\ast_{\tau_t})(y) K(x,y) dy \mu^\ast_{\tau_t}(x) dx\Big) \Big]\\
    & \qquad -  \tau_t^2 \int_{\mX} \nabla_x \log \Big(\frac{d\mu_t}{d \mK^-(\mK^+(\mu_t))}\Big) \cdot \nabla_x \log \Big(\frac{d\mu_t}{d \mK^-(\nu_t)}\Big) \mu_t(dx)\\
    & \leq \frac{\tau_t^\prime}{\tau_t} \Big(\mL_1(\mu_t) + 2\|K\|_{\infty} \Big) -\frac{\tau_t^2}{2} \mI(\mu_t | \mK^-(\mK^+(\mu_t))) + K_{xy}^2  \diam (\mY)^2\cdot \mH(\nu_t | \mK^+(\mu_t)).
\end{aligned}    
\end{equation}
Next, we have from the ascent dynamics for $\nu_t$,
\begin{equation}\label{eq:dL2tau}
\begin{aligned}
 \frac{d}{dt} \mL_2 (\mu_t,\nu_t) &= \tau^\prime_t  \mH(\nu_t | \mK^+(\mu_t)) + \tau_t \int_{\mY} \log\Big(\frac{d\nu_t}{d \mK^+(\mu_t)}  \Big)\partial_t \nu_t(dy) - \tau_t \int_{\mY} \frac{d}{dt} (\log \mK^+ (\mu_t) )\nu_t(dy)\\
 & \leq \tau^\prime_t  \mH(\nu_t | \mK^+(\mu_t)) -\eta_t\tau_t^2 \mI(\nu_t| \mK^+(\mu_t))- \tau_t \int_{\mY} \frac{d}{dt} (\log \mK^+ (\mu_t) )\nu_t(dy).
\end{aligned}    
\end{equation}
The third term on the RHS above is 
$$
\begin{aligned}
& -\tau_t \Big(\int_{\mY} \int_{\mX} \frac{d}{dt} (\tau_t^{-1} \mu_t(x)) K(x,y)dx \nu_t(y) dy - \frac{d}{dt}\log (Z^+(\mu_t)) \Big)\\
& = -\tau_t \Big(\int_{\mY} \int_{\mX} \frac{d}{dt} (\tau_t^{-1} \mu_t(x)) K(x,y)dx (\nu_t(y) - \mK^+(\mu_t)(y)) dy \Big)\\
& = - \frac{\tau^\prime_t }{\tau_t}\int_{\mY} \int_{\mX}  \mu_t(x) K(x,y)dx (\nu_t(y) - \mK^+(\mu_t)(y)) dy\\
 & \qquad \qquad - \int_{\mY} \int_{\mX} \partial_t \mu_t(x) K(x,y)dx (\nu_t(y) - \mK^+(\mu_t)(y)) dy\\
 & = - \frac{\tau^\prime_t }{\tau_t}\int_{\mY} \int_{\mX}  \mu_t(x) K(x,y)dx (\nu_t(y) - \mK^+(\mu_t)(y)) dy\\
 & \qquad + \tau_t^2 \int_{\mX} \nabla_x \log \Big(\frac{d\mK^-(\mK^+(\mu_t))}{d\mK^-(\nu_t)}\Big) \cdot \nabla_x \log \Big(\frac{d\mu_t}{d\mK^-(\nu_t)}\Big)d\mu_t(x)
 \\
& \leq \frac{2\|K\|_{\infty} |\tau^{\prime}_t|}{\tau_t} + \frac{\tau_t^2}{2} \mI(\mu_t |\mK^-(\mK^+(\mu_t)) ) + 3 K_{xy}^2 \diam(\mY)^2\cdot \mH(\nu_t | \mK^+(\mu_t)).
\end{aligned}
$$
Combining the last two displays yields
\begin{equation}\label{eq:dmL2}
\begin{aligned}
    &  \frac{d}{dt} \mL_2 (\mu_t,\nu_t) \leq  \Big(\tau^\prime_t + 3 K_{xy}^2 \diam(\mY)^2\Big) \mH(\nu_t | \mK^+(\mu_t)) -\eta_t\tau_t^2 \mI(\nu_t| \mK^+(\mu_t)) \\
     & \qquad + \frac{2\|K\|_{\infty} |\tau^{\prime}_t|}{\tau_t} + \frac{\tau_t^2}{2} \mI(\mu_t |\mK^-(\mK^+(\mu_t)) ).
\end{aligned}
\end{equation}
It then follows from \eqref{eq:dE1tau2} \eqref{eq:dmL2} that 
$$
\begin{aligned}
   & \frac{d}{dt} \mL( \mu_t,\nu_t) \leq -\frac{\tau_t^2(1-\gamma)\lambda_{LS}(\tau_t)}{2} \mH(\mu_t |\mK^-(\mK^+(\mu_t)) ) + \Big(\gamma\tau^{\prime}_t + (1+3\gamma) K_{xy}^2 \diam(\mY)^2\Big)\mH(\nu_t | \mK^+(\mu_t)) \\
    & + \frac{2(1+\gamma)\|K\|_{\infty} |\tau^{\prime}_t|}{\tau_t} + \frac{\tau^{\prime}_t}{\tau_t}\mL_1(\mu_t)-\gamma \eta_t\tau_t^2\mI(\nu_t| \mK^+(\mu_t))\\
    &\leq -\Big(\frac{(1-\gamma)\tau_t\lambda_{LS}(\tau_t)}{2} - \frac{\tau^{\prime}_t}{\tau_t}\Big) \mL_1(\mu_t) -\Big(\gamma \eta_t\tau_t  \lambda_{LS}(\tau_t)-\frac{\gamma\tau^{\prime}_t}{\tau_t} - \frac{(1+3\gamma) K_{xy}^2 \diam(\mY)^2}{\tau_t}\Big)  \mL_2(\mu_t,\nu_t)\\
    & + \frac{2(1+\gamma)\|K\|_{\infty} |\tau^{\prime}_t|}{\tau_t}\\
    & \leq -\alpha_t \mL(\mu_t,\nu_t) + \frac{2(1+\gamma)\|K\|_{\infty}}{t \log t },
\end{aligned}
$$
where the time-dependent rate satisfies for large $t$ that
$$\begin{aligned}
\alpha_t & = \Big(\Big(\frac{(1-\gamma)\tau_t\lambda_{LS}(\tau_t)}{2}-\frac{\tau^{\prime}_t}{\tau_t}\Big) \wedge \Big(\eta_t \tau_t \lambda_{LS}(\tau_t)-\frac{\tau^{\prime}_t}{\tau_t} - \frac{(1+3\gamma) K_{xy}^2 \diam(\mY)^2}{\gamma \tau_t}\Big)\Big)\\
& \geq \frac{\xi}{\log t}\Big(C_1 t^{-\xi_{\ast}/\xi} \wedge \Big(\eta_t t^{-\xi^\ast/\xi}+\frac{C_2}{t \log t}  - \frac{C_3}{(\log t)^2}\Big)\Big).
\end{aligned}$$
Note that in the above we have used the decreasing of $\tau_t$ for large $t$. 
By choosing $\eta_t \geq M \frac{t^{\xi^\ast/\xi}}{(\log t)^2}$ for some large $M>0$, we have 
$$
\alpha_t \geq \frac{C t^{-\xi^\ast/\xi-\epsilon}}{\log t}. 
$$
As a result, we have obtained that for any $\epsilon^\prime <1-\xi^\ast/\xi$ and for  $t$ large enough, 
$$
\frac{d}{dt} \mL( \mu_t,\nu_t) \leq -C t^{-\xi^\ast/\xi-\epsilon^\prime}\mL( \mu_t,\nu_t) + \frac{C^\prime}{t \log t}. 
$$
Define $Q(t) =  \mL( \mu_t,\nu_t)  - \frac{C^\prime}{C} t^{-1 + \xi^\ast/\xi+\epsilon^\prime}$. Then it is straightforward to show that for $t > t_\ast$ large enough,
$$
\frac{d}{dt} Q(t) \leq -C t^{-\xi^\ast/\xi-\epsilon^\prime} Q(t),
$$
which  implies that 
\begin{equation}\label{eq:bdLmunu}
    \mL(\mu_t,\nu_t) \leq Q(t_\ast) e^{-\frac{C}{1-\xi^\ast/\xi-\epsilon} (t^{1-\xi^\ast/\xi-\epsilon^\prime} - t_\ast^{1-\xi^\ast/\xi-\epsilon^\prime})} + \frac{C^\prime}{C} t^{-1 + \xi^\ast/\xi+\epsilon^\prime} \leq C^{\prime\prime} t^{-1 + \xi^\ast/\xi+\epsilon^\prime}
\end{equation}
since $\xi^\ast < \xi$ and $Q(t_\ast)$ is finite. By the definition of $\mL$ and the fact that $\log t \ll t^\epsilon$ for any $\epsilon>0$, the last estimate further implies that for any $0<\epsilon<1-\xi^\ast/\xi$,
\begin{equation}\label{eq:bdHmutstar}
  \mH(\mu_t | \mu_{\tau_t}^\ast) \leq C^{\prime\prime} t^{-1 + \xi^\ast/\xi+\epsilon}, \text{ for } t> t_\ast.  
\end{equation}
In addition, using the same arguments used in the proof of the second bound in \eqref{eq:tauHnut} from Theorem \ref{thm:main2}, one can obtain that 
$$
\mH(\nu_t | \nu_{\tau_t}^\ast) \leq C^{\prime\prime} t^{-1 + \xi^\ast/\xi+\epsilon}, \text{ for } t> t_\ast. 
$$

\textbf{Step 2: }  Bounding $\NI(\mu_t, \nu_t)$. Let us first claim that the difference $\NI(\mu_t, \nu_t) - \NI(\mu_{\tau_t}^\ast, \nu_{\tau_t}^\ast)$ satisfies 
\begin{equation}\label{eq:diffNI}
     \NI(\mu_t, \nu_t) - \NI(\mu_{\tau_t}^\ast, \nu_{\tau_t}^\ast) \leq Ct^{-\frac{1-\xi^\ast/\xi -\epsilon}{2}}. 
\end{equation}

In fact, by definition, 
$$
\begin{aligned}
   &  \NI(\mu_t, \nu_t) - \NI(\mu_{\tau_t}^\ast, \nu_{\tau_t}^\ast) = \max_{\nu\in \mP(\mX)}E_0(\mu_t, \nu) -\max_{\nu\in \mP(\mX)}E_0(\mu_{\tau_t}^\ast, \nu) + \min_{\mu \in \mP(\mX)} E_0(\mu, \nu_{\tau_t}^\ast)  - \min_{\mu \in \mP(\mX)} E_0(\mu, \nu_t)\\
    & = \max_{y\in \mY}\int_{\mX} K(x,y)\mu_t(dx) -\max_{y\in \mY}\int_{\mX} K(x,y)\mu_{\tau_t}^\ast(dx)\\
    & \qquad + \min_{x \in \mX} \int_{\mY} K(x,y) \nu_{\tau_t}^\ast(dy)  - \min_{x\in \mX} \int_{\mY} K(x,y) \nu_t(dy)\\
    & =: J_1 + J_2. 
\end{aligned} 
$$
To bound $J_1$, let us define $ y_t \in  \argmax_{y\in \mY}\int_{\mX} K(x,y)\mu_t(dx) $ and $y^\ast_t \in \argmax_{y\in \mY}\int_{\mX} K(x,y)\mu_{\tau_t}^\ast (dx) $. Then by the optimality of $y^\ast_t$ and the boundedness of $K$, 
$$
\begin{aligned}
J_1 & = \int_{\mX} K(x,y_t )\mu_t(dx) - \int_{\mX} K(x,y^\ast_t)\mu_{\tau_t}^\ast(dx)\\
& \leq \int_{\mX} K(x,y_t )\mu_t(dx) - \int_{\mX} K(x,y_t)\mu_{\tau_t}^\ast(dx)\\
& \leq \|K\|_{\infty} \text{TV} (\mu_t, \mu_{\tau_t}^\ast)\\
& \leq \sqrt{2}\|K\|_{\infty} \sqrt{\mH(\mu_t| \mu_{\tau_t}^\ast)}\\
& \leq C t^{-\frac{1-\xi^\ast/\xi-\epsilon}{2}} . 
\end{aligned} 
$$
The same bound holds for $J_2$ after applying a similar argument as above, which completes the proof of \eqref{eq:diffNI}.
Finally,  applying Lemma \ref{lem:laplace} with $\tau = \tau_t = \xi/\log t$, we have for $t$  large enough, 
\begin{equation}\label{eq:NIstar}
  \NI(\mu_{\tau_t}^\ast, \nu_{\tau_t}^\ast) \leq \frac{C \log (\log t)}{\log t }.  
\end{equation}
Hence the estimate \eqref{eq:NImut} follows from \eqref{eq:diffNI} and \eqref{eq:NIstar}.

\end{proof}

We restate \cite[Theorem 5]{domingo2020mean} in the following lemma which is used to control $ \NI(\mu_{\tau}^\ast, \nu_{\tau}^\ast)$. 
\begin{lemma}\cite[Theorem 5]{domingo2020mean}\label{lem:laplace}
Let $\epsilon >0, \delta := \epsilon/(2\text{Lip}(K))$ and let $V_{\delta}$ be a lower bound on the volume of a ball of radius of $\delta$ in $\mX$ and $\mY$. Let $(\mu_{\tau}^\ast, \nu_{\tau}^\ast)$ be the solution of the solution of \eqref{eq:fixedpoint}. Then $(\mu_{\tau}^\ast, \nu_{\tau}^\ast)$ is an $\epsilon$-Nash equilibrium of $E_0$ if  
$$
\tau \leq \frac{\epsilon}{4 \log\Big(\frac{2(1-V_{\delta})}{V_{\delta}} (4\|K\|_{\infty}/\epsilon-1)\Big)}.
$$
In particular, when $\mX$ and $\mY$ are Riemannian manifolds or Euclidean tori of dimensions $d_{\mX}$ and $d_{\mY}$ respectively so that $\text{Vol}(B_{x}^\delta) \geq C \delta^{d_{\mX}}$ and $\text{Vol}(B_{y}^\delta) \geq C \delta^{d_{\mY}}$, the bounds above become
$$
\tau \leq \frac{C\epsilon}{\log \epsilon^{-1}}
$$
for some constant $C>0$ depending only on $K,d_{\mX}, d_{\mY}$. Alternatively, if $\tau$ is sufficiently small, then $(\mu_{\tau}^\ast, \nu_{\tau}^\ast)$ is an $\epsilon$-Nash equilibrium with 
\begin{equation}\label{eq:epsNash}
  \epsilon = \beta \tau(\log (1/\tau)) \text{ for } \beta > d_{\mX}\vee d_{\mY} +1.
\end{equation}
\end{lemma}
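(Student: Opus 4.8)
The plan is to follow the Laplace-asymptotics argument behind \cite[Theorem 5]{domingo2020mean}, exploiting that the fixed point $(\mu_\tau^\ast,\nu_\tau^\ast)$ of \eqref{eq:fixedpoint} is a pair of Gibbs measures. Set $g_\tau^+(y):=\int_\mX K(x,y)\,\mu_\tau^\ast(dx)$ and $g_\tau^-(x):=\int_\mY K(x,y)\,\nu_\tau^\ast(dy)$, so that $\nu_\tau^\ast\propto e^{g_\tau^+/\tau}$ and $\mu_\tau^\ast\propto e^{-g_\tau^-/\tau}$, with $\mathrm{Lip}(g_\tau^\pm)\le\mathrm{Lip}(K)$ and $\mathrm{osc}(g_\tau^\pm)\le 2\|K\|_\infty$. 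First I would rewrite the Nikaid\`o--Isoda error: since the best responses to $\mu_\tau^\ast$ and to $\nu_\tau^\ast$ are Dirac masses at the extrema of $g_\tau^\pm$, the definition \eqref{eq:NIerror} gives
\[
\NI(\mu_\tau^\ast,\nu_\tau^\ast)=\Bigl(\max_\mY g_\tau^+-\int_\mY g_\tau^+\,d\nu_\tau^\ast\Bigr)+\Bigl(\int_\mX g_\tau^-\,d\mu_\tau^\ast-\min_\mX g_\tau^-\Bigr).
\]
Since $(\mu_\tau^\ast,\nu_\tau^\ast)$ being an $\epsilon$-Nash equilibrium of $E_0$ is equivalent (up to a harmless factor $2$) to bounding this quantity by $\epsilon$, it suffices to show each of the two parenthesized gaps is $O(\epsilon)$ under the stated condition on $\tau$; the two gaps are of the same type, so I focus on the first.

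The heart of the argument is a quantitative, \emph{base-measure-uniform} Laplace bound: for $g$ with $\mathrm{Lip}(g)\le\mathrm{Lip}(K)$ on $\mY$ and any $\delta>0$, restricting the partition function to a ball of radius $\delta$ about a near-maximizer of $g$ gives $\int_\mY e^{g/\tau}\,dy\ge V_\delta\,e^{(\max g-\mathrm{Lip}(K)\delta)/\tau}$, and hence for any $s>\mathrm{Lip}(K)\delta$ the Gibbs measure $\gamma_\tau\propto e^{g/\tau}$ satisfies
\[
\gamma_\tau\bigl(\{g<\max g-s\}\bigr)\le\frac{1}{V_\delta}\exp\!\Bigl(-\tfrac{s-\mathrm{Lip}(K)\delta}{\tau}\Bigr).
\]
Splitting $\int(\max g-g)\,d\gamma_\tau$ over $\{g\ge\max g-s\}$ and its complement and using $\mathrm{osc}(g)\le 2\|K\|_\infty$ yields
\[
\max_\mY g-\int_\mY g\,d\gamma_\tau\le s+\frac{2\|K\|_\infty}{V_\delta}\exp\!\Bigl(-\tfrac{s-\mathrm{Lip}(K)\delta}{\tau}\Bigr),
\]
and the identical inequality (applied to $-g_\tau^-$) controls $\int_\mX g_\tau^-\,d\mu_\tau^\ast-\min_\mX g_\tau^-$. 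Assumption \ref{assS} is exactly what guarantees $V_\delta>0$, and in the manifold/torus case it further gives $V_\delta\ge C\delta^{d_\mX\vee d_\mY}$.

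It then remains to optimize the free parameters. Taking $\delta=\epsilon/(2\mathrm{Lip}(K))$ and $s$ of order $\epsilon$ (so $s-\mathrm{Lip}(K)\delta\asymp\epsilon$) makes the ``Lipschitz-width'' part of order $\epsilon$ and reduces everything to forcing the residual exponential term below $\epsilon$; solving the resulting inequality $e^{\epsilon/(4\tau)}\gtrsim\|K\|_\infty/(\epsilon V_\delta)$ for $\tau$ produces, after elementary rearrangement, exactly the stated sufficient condition, of the form $\tau\le\epsilon/(4\log(\tfrac{2(1-V_\delta)}{V_\delta}(4\|K\|_\infty/\epsilon-1)))$. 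For the two ``in particular'' claims, substituting $V_\delta\ge C\delta^{d_\mX\vee d_\mY}$ turns $-\log V_\delta$ into $(d_\mX\vee d_\mY)\log\epsilon^{-1}+O(1)$, so the denominator is $\Theta(\log\epsilon^{-1})$ and the threshold becomes $\tau\le C\epsilon/\log\epsilon^{-1}$; inverting this — plug in the ansatz $\epsilon=\beta\tau\log(1/\tau)$, use $\log\epsilon^{-1}=(1+o(1))\log(1/\tau)$ as $\tau\downarrow0$, and check the inequality holds once $\beta>d_\mX\vee d_\mY+1$ — yields \eqref{eq:epsNash}.

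The main obstacle I anticipate is keeping the estimate uniform in the base measures and explicit in dimension without any nondegeneracy (e.g. strong concavity near the maximizer) of $g_\tau^\pm$, which rules out the classical Laplace asymptotic expansion: one is forced to balance the Lipschitz width $s\asymp\delta$ against the volume/entropy cost $\tau\log(1/V_\delta)\asymp\tau(d_\mX\vee d_\mY)\log(1/\delta)$, and it is precisely this trade-off that produces the unavoidable logarithmic gap between the temperature $\tau$ and the target accuracy $\epsilon$ (hence the $\log\epsilon^{-1}$, resp.\ $\log(1/\tau)$, in the two forms of the conclusion). A minor technical point is to justify that the near-maximizer used in the partition-function lower bound can be taken genuinely in $\mX$ (resp.\ $\mY$) with a surrounding $\delta$-ball of volume $\ge V_\delta$, which is immediate from compactness together with Assumption \ref{assS}.
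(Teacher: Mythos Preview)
Your proposal is correct and follows essentially the same Laplace-concentration argument as the paper (which in turn defers the bulk of it to \cite[Appendix C.4]{domingo2020mean}): lower-bound the partition function by restricting to a $\delta$-ball around a near-maximizer, split the expectation gap into a width term and an exponential tail, and then choose $\delta=\epsilon/(2\mathrm{Lip}(K))$ and $s\asymp\epsilon$. For \eqref{eq:epsNash} the paper does exactly your ``plug in $\epsilon=\beta\tau\log(1/\tau)$ and verify'' step, reducing the sufficient condition to $e^{\epsilon/(2\tau)}\ge C_1\epsilon^{-(d_\mX\vee d_\mY+1)}$ and checking it for $\beta>d_\mX\vee d_\mY+1$.
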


\section{Conclusion and future work} \label{sec:conclusion}
In this paper, we proved the global exponential convergence of two-scale Mean-Field GDA dynamics for finding the unique MNE of the entropy-regularized game objective on the space of probability measures. We also proved that the convergence of the annealed GDA dynamics to the MNE of the unregularized game objective with respect to the Nikaid\` o-Isoda error. The key ingredient of our proofs are new Lyapunov functions which are used to capture the dissipation of the two-scale GDA dynamics in different scaling regimes. 

We would like to mention several open problems and future research directions. First, although we have proved the global convergence of the Mean-Field GDA in  the fast ascent/descent regime (with a finite but large/small time-scale ratio), it remains an open question to show the convergence or nonconvergence of the Mean-Field GDA in the intermediate time-scale  regime, including particularly the case where no time-scale separation occurs (i.e. $\eta =1$ in \eqref{eq:WGDA}). Also, currently our  results only hold on bounded state spaces and the convergence rate depends on the uniform bounds of the potential function $K$ on the state spaces. It remains an important question to extend the results to minmax optimization on unbounded state spaces.  In practice, the Mean-Field GDA needs to be implemented by a certain interacting particle algorithm such as \eqref{eq:gdaparticle} with a large number of particles. Existing result on the mean field limit of \eqref{eq:gdaparticle} (e.g. \cite[Theorem 3]{domingo2020mean}) only holds in finite time interval and the error bound can potentially grow exponentially in time. To obtain a quantitative error analysis of the GDA for minmax optimization, it is an interesting open question to derive a uniform-in-time quantitative error bound between the particle system and the mean field dynamics.  Finally, we anticipate our results can be exploited to prove theoretical convergence guarantees for a variety of minmax learning problems, including especially the training of GANs \cite{goodfellow2020generative}, adversarial learning problems \cite{madry2018towards}, dual training of energy based models \cite{dai2019exponential,domingo2021dual} and weak adversarial networks for PDEs \cite{zang2020weak}.

\section*{Acknowledgments}
The author thanks Lexing Ying for suggesting the question  and thanks Jianfeng Lu, Yiping Lu and  Chao Ma for helpful discussions at the early stage of the problem setup. The author also thank  Fei Cao and L\'ena\"ic Chizat for the valuable feedback on an early version of the paper. This work is supported by the National Science Foundation through the award DMS-2107934. 

\appendix
\section*{Appendix}

\section{Proofs of preliminary lemmas}\label{sec:applem}

\begin{proof}[Proof of Lemma \ref{lem:bdL}]
First we have from the definition of $E$ and \eqref{eq:mK2} that 
$$\begin{aligned}
\mL_2 (\mu,\nu) & =
\max_{\nu\in \mP(\mY)}  E(\mu, \nu) - E(\mu, \nu)\\
& = \tau\log Z^+(\mu) -  \int_{\mX}\int_{\mY} K(x,y)\mu(dx) \nu(dy) - \tau \mH(\nu)\\
& = \tau \mH(\nu | \mK^+ (\mu))
\end{aligned}
$$
which proves \eqref{eq:ml2}. 
To see \eqref{eq:ml4}, notice again from \eqref{eq:mK2} that 
$$\begin{aligned}
\mL_4 (\mu,\nu) & =
E(\mu,\nu)  - \min_{\mu\in \mP(\mX)} E(\mu,\nu) \\
& = \int_{\mX}\int_{\mY} K(x,y)\mu(dx) \nu(dy) - \tau \mH(\mu) + \tau\log Z^-(\nu)  \\
& = \tau\mH(\mu | \mK^-(\nu)).
\end{aligned}
$$
Next, we prove \eqref{eq:ml1}. In fact, replacing $\nu$ by $\mK^-(\mu)$ in the last display leads to 
$$
\begin{aligned}
\tau\mH(\mu | \mK^-(\mK^+ (\mu))) & = \mL_4(\mu, \mK^+ (\mu))\\
& = E(\mu, \mK^+(\mu)) - \min_{\mu\in \mP(\mX)} E(\mu, \mK^+(\mu))\\
& = \max_{\nu\in \mP(\mY)}E(\mu, \nu) - \min_{\mu\in \mP(\mX)} E(\mu, \mK^+(\mu))\\
& \geq \max_{\nu\in \mP(\mY)}E(\mu, \nu) - \min_{\mu\in \mP(\mX)}\max_{\nu\in \mP(\mY)} E(\mu, \nu)\\
& = \mL_1(\mu),
\end{aligned}
$$
which proves the lower bound part of \eqref{eq:ml1}. To prove the upper bound part, we first compute the first-variation of the log partition function $\log Z^+(\mu)$ as 
\begin{equation}\label{eq:dlogZp}
    \frac{\partial}{\partial \mu} \log Z^+(\mu) = \tau^{-1} \int_{\mY} K(x,y) d \mK^{+}(\mu)(y) = - \log (\mK^-(\mK^+(\mu))) - \log Z^-(\mK^+(\mu)).
\end{equation}
Then an application of the convexity of $\mu \gt \log Z^+(\mu)$ shown in Lemma \ref{lem:convpart} yields $$
\begin{aligned}
\log Z^+(\mu) - \log Z^+(\mu^\ast) & \geq \int_{\mX} \frac{\partial \log Z^+(\mu)}{\partial \mu}\Big|_{\mu = \mu^\ast} (d\mu -d\mu^\ast)\\
& = -\int_{\mX}\log (\mK^-(\mK^+(\mu^\ast))) (d\mu -d\mu^\ast)\\
& = -\int_{\mX}\log (\mu^\ast) (d\mu -d\mu^\ast).
\end{aligned}
$$
Consequently, one obtains that 
$$
\begin{aligned}
\mL_1(\mu) & = \mE_1(\mu)  - \mE_1(\mu^\ast)\\
&= -\tau( \mH(\mu) - \mH(\mu^\ast)) + \tau (\log Z^+(\mu) - \log Z^+(\mu^\ast))\\
& \geq \tau \int \log \mu d\mu - \int \log \mu^\ast d\mu^\ast -\int_{\mX}\log (\mu^\ast) (d\mu -d\mu^\ast)\\
& = \tau \mH(\mu | \mu^\ast).
\end{aligned}$$

Finally, the inequality \eqref{eq:ml3} follows from the same reasoning above by exploiting the convexity of $\nu \gt \log Z^-(\nu)$.

\end{proof}

The following lemma establishes the convexity of the log partition functions $\log Z^+(\mu)$ and  $\log Z^-(\nu)$. It was proved in \cite[Proposition 3]{ma2021provably}, but for completeness we include its proof here.
\begin{lemma}\label{lem:convpart}
Both the functional $\mu \gt \log Z^+(\mu)$ and the functional $\nu \gt \log Z^-(\nu)$ are convex.
\end{lemma}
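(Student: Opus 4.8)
The plan is to recognize $\log Z^+$ as the composition of an affine map with the classical log-Laplace (cumulant generating) functional, whose convexity is a one-line consequence of H\"older's inequality. First I would fix $\mu_0,\mu_1\in\mP(\mX)$ and $t\in[0,1]$, and set $\mu_t := t\mu_1 + (1-t)\mu_0\in\mP(\mX)$. For each fixed $y\in\mY$, the map $\mu\mapsto F_y(\mu):=\int_{\mX}K(x,y)\,d\mu(x)$ is affine along the segment, so $F_y(\mu_t) = tF_y(\mu_1)+(1-t)F_y(\mu_0)$, whence
$$Z^+(\mu_t)=\int_{\mY}\exp\big(\tau^{-1}F_y(\mu_t)\big)\,dy=\int_{\mY}\big(e^{\tau^{-1}F_y(\mu_1)}\big)^{t}\big(e^{\tau^{-1}F_y(\mu_0)}\big)^{1-t}\,dy.$$

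Next I would apply H\"older's inequality with the conjugate exponents $1/t$ and $1/(1-t)$ to the last integral, which yields
$$Z^+(\mu_t)\le\Big(\int_{\mY}e^{\tau^{-1}F_y(\mu_1)}\,dy\Big)^{t}\Big(\int_{\mY}e^{\tau^{-1}F_y(\mu_0)}\,dy\Big)^{1-t}=Z^+(\mu_1)^{t}\,Z^+(\mu_0)^{1-t}.$$
Taking logarithms gives $\log Z^+(\mu_t)\le t\log Z^+(\mu_1)+(1-t)\log Z^+(\mu_0)$, i.e. convexity of $\mu\mapsto\log Z^+(\mu)$. All integrals are finite since $K$ is bounded and $\mX,\mY$ are compact under Assumptions~\ref{assS}--\ref{assK}, so H\"older applies with no integrability caveats, and the degenerate cases $t\in\{0,1\}$ are trivial. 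The claim for $\nu\mapsto\log Z^-(\nu)$ then follows by the identical argument: the only change is a sign in the exponent, $Z^-(\nu)=\int_{\mX}\exp(-\tau^{-1}\int_{\mY}K(x,y)\,d\nu(y))\,dx$, and $\nu\mapsto -\int_{\mY}K(x,y)\,d\nu(y)$ is still affine along segments in $\mP(\mY)$, so the same H\"older step gives $Z^-(\nu_t)\le Z^-(\nu_1)^{t}Z^-(\nu_0)^{1-t}$ and hence convexity after taking logs.

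There is no genuine obstacle here; the only points requiring a moment's care are that convexity is understood along convex combinations \emph{within} the probability simplex, so that the relevant map into the exponent is affine rather than merely linear on signed measures, and that $1/t$ and $1/(1-t)$ are indeed H\"older-conjugate. Both are immediate, so the entire proof reduces to the standard fact that a log-integral-exp of an affine family is convex.
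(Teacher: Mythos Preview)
Your proof is correct and follows essentially the same approach as the paper: both arguments exploit the affineness of $\mu\mapsto\int K(x,y)\,d\mu(x)$ to split the exponential and then apply H\"older's inequality with conjugate exponents $1/t$ and $1/(1-t)$, followed by taking logarithms. The paper's write-up is slightly terser (and contains a minor typo conflating $Z^+$ with $\log Z^+$), but the substance is identical.
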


\begin{proof}
We only present the proof for the convexity of the map $\mu \gt \log Z^+(\mu)$ since the other case can be proved in the same manner. For any $\mu_1,\mu_2\in \mP(\mX)$ and $\alpha \in (0,1)$,  
$$
\begin{aligned}
    &Z^+(\alpha \mu_1 + (1-\alpha)\mu_2)\\
    & = \log \Big(\int_{\mY} \exp\Big(\int_{\mX} \tau^{-1} K(x,y) (\alpha \mu_1(dx)+ (1-\alpha) \mu_2(dx))\Big)dy\Big)\\
    & = \log \Big(\int_{\mY} \exp\Big(\alpha \int_{\mX} \tau^{-1} K(x,y)  \mu_1(dx)\Big) \cdot \exp\Big( (1-\alpha) \int_{\mX} \tau^{-1} K(x,y) \mu_2(dx)\Big)dy\Big)\\
    & \leq \log \Big(\Big(\int_{\mY} \exp\Big( \int_{\mX} \tau^{-1} K(x,y)  \mu_1(dx)\Big) dy \Big)^\alpha \cdot \Big(\int_{\mY} \exp\Big(  \int_{\mX} \tau^{-1} K(x,y) \mu_2(dx)\Big)dy\Big)^{1-\alpha}\Big)\\
    & = \alpha Z^+( \mu_1) +  (1-\alpha)Z^+( \mu_2). 
\end{aligned}
$$

\end{proof}

\begin{proof}[Proof of Proposition \ref{prop:diffL}]
First let us compute the functional gradient 
$
\frac{\partial \mL_1}{\partial u}(\mu_t). 
$
In fact, thanks to the fact that $\mL_1(\mu) = \mE_1(\mu) - \mE_1(\mu^\ast)$ and \eqref{eq:mK3}, one has that 
$$
\begin{aligned}
\frac{\partial \mL_1}{\partial u} \Big|_{\mu = \mu_t} & = -\tau\frac{\partial }{\partial u} \Big(\mH(\mu) - \log Z^+(\mu)\Big)  \Big|_{\mu = \mu_t}\\
& = \tau\Big( \log \mu + \tau^{-1} \int_{\mY} K(x,y) d \mK^{+}(\mu)(y) \Big)\Big|_{\mu = \mu_t}\\
& = \tau\Big(\log \Big(\frac{d\mu_t}{d \mK^-(\mK^+(\mu_t))} \Big) - \log Z^-(\mK^+(\mu_t))\Big), 
\end{aligned}
$$
where in the second identity we have used \eqref{eq:dlogZp}. 
Therefore it follows from \eqref{eq:WGDA} and the Cauchy-Schwarz inequality that 
$$
\begin{aligned}
 \frac{d}{dt} \mL_1(\mu_t)& = \int_{\mX} \frac{\partial \mL_1}{\partial u}(\mu_t) d (\partial_t \mu_t)\\
 & = \tau^2\int_{\mX} \Big(\log \Big(\frac{d\mu_t}{d \mK^-(\mK^+(\mu_t))} \Big) - \log Z^-(\mK^-(\mK^+(\mu_t)))\Big)  \nabla_x \cdot \Big(\mu_t\nabla_x \log\Big(\frac{d\mu_t}{d \mK^-(\nu_t)}\Big)\Big)dx\\
 & = -\tau^2 \int_{\mX} \nabla_x \log \Big(\frac{d\mu_t}{d \mK^-(\mK^+(\mu_t))}\Big) \cdot \nabla_x \log\Big(\frac{d\mu_t}{d \mK^-(\nu_t)}\Big) d\mu_t(x)\\
 & = -\tau^2\int_{\mX} \Big|\nabla_x \log \Big(\frac{d\mu_t}{d \mK^-(\mK^+(\mu_t))}\Big) \Big|^2d\mu_t(x) \\
 & \qquad - \tau^2\int_{\mX}  \nabla_x \log \Big(\frac{d \mK^-(\mK^+(\mu_t))}{d  \mK^-(\nu_t)}\Big)  \cdot \nabla_x \log \Big(\frac{d\mu_t}{d \mK^-(\mK^+(\mu_t))}\Big) d\mu_t(x)\\
 & \leq -\frac{\tau^2}{2} \mI(\mu_t | \mK^-(\mK^+(\mu_t))) + \frac{\tau^2}{2} \int_{\mX}  \Big|\nabla_x \log \Big(\frac{d \mK^-(\mK^+(\mu_t))}{d  \mK^-(\nu_t)}\Big)\Big|^2 d\mu_t(x).
\end{aligned}
$$
Furthermore, using the fact that 
$$
\mK^-(\mK^+(\mu_t)) \propto \exp\Big(-\int_{\mY} \tau^{-1} K(x,y) d\mK^+(\mu_t) (y) \Big),\quad \mK^-(\nu_t) \propto \exp\Big(-\int_{\mY} \tau^{-1} K(x,y) d\nu_t(y) \Big),
$$
one derives that 
\begin{equation}
    \begin{aligned}\label{eq:reskl}
    \frac{1}{2} \int_{\mX}  \Big|\nabla_x \log \Big(\frac{d \mK^-(\mK^+(\mu_t))}{d  \mK^-(\nu_t)}\Big)\Big|^2 d\mu_t(x) & = \frac{1}{2\tau^2} \int_{\mX}  \Big|\int_{\mY} \nabla_x K(x,y) (d\mK^+(\mu_t)(y) - d\nu_t(y)) \Big|^2 d\mu_t(x).
\end{aligned}
\end{equation}
Moreover, using the mean value theorem, one has for any $y_0\in \mY$ that 
$$
\nabla_x K(x,y)-\nabla_x K(x,y_0) = \Big[\int_0^1 \nabla_{xy}^2 K(x,y_0 + s(y-y_0)) s\, ds \Big](y - y_0 ).
$$
Inserting the last identity into \eqref{eq:reskl} leads to 
\begin{equation}
    \begin{aligned}\label{eq:reskl2}
    \frac{1}{2} \int_{\mX}  \Big|\nabla_x \log \Big(\frac{d \mK^-(\mK^+(\mu_t))}{d  \mK^-(\nu_t)}\Big)\Big|^2 d\mu_t(x) 
    & \leq \frac{K_{xy}^2  \diam (\mY)^2}{2\tau^2}  \text{TV}^2(\mK^+(\mu_t), \nu_t)\\
    & \leq  \frac{K_{xy}^2  \diam (\mY)^2}{\tau^2}  \mH(\nu_t| \mK^+(\mu_t)),
\end{aligned}
\end{equation}
where we have used the Pinsker's inequality in the last inequality above. Combining the last two estimates proves \eqref{eq:dtL1}.

Next we proceed with the proof of \eqref{eq:dtL2}. Recall from \eqref{eq:ml2} that 
$$
\mL_2(\mu,\nu) = \tau \mH(\nu | \mK^+ (\mu)) = \tau \int_{\mY} \log \Big( \frac{\nu}{\mK^+(\mu)}\Big) d\nu.
$$
Hence 
\begin{equation}\label{eq:dL2-1}
  \begin{aligned}
\frac{d}{dt}\mL_2(\mu_t,\nu_t) = \tau \Big(\int_{\mY} \log \Big( \frac{\nu_t}{\mK^+(\mu_t)}\Big) d(\partial_t \nu_t) - \int_{\mY} \frac{d}{dt} \log (\mK^+(\mu_t)) d\nu_t \Big)
\end{aligned}  
\end{equation}
Using the second equation of \eqref{eq:WGDA} that $\nu_t$ solves, one sees that the first term on the right side above becomes
\begin{equation}\label{eq:dL2-2}
    \tau \Big(\int_{\mY} \log \Big( \frac{\nu_t}{\mK^+(\mu_t)}\Big) d(\partial_t \nu_t) = -\eta\tau^2 \mI(\nu_t | \mK^+(\mu_t)). 
\end{equation}
To compute the second term on the right side  of \eqref{eq:dL2-1}, observe that 
$$
\log \mK^+(\mu_t) = \tau^{-1}\int_{\mX} K(x,y) d\mu_t(x) - \log (Z^+(\mu_t)). 
$$
An a result of above and \eqref{eq:dlogZp}, one obtains that 
$$
\begin{aligned}
     & -\tau \int_{\mY} \frac{d}{dt} \log (\mK^+(\mu_t)) d\nu_t  \\
     & = -\tau \int_{\mY} \int_{\mX} \Big(\tau^{-1} K(x,y)  - \frac{\partial \log (Z^+(\mu))}{\partial \mu} (\mu_t)\Big) d (\partial_t \mu_t(x)) d\nu_t(y)\\
    & = -\tau  \int_{\mX}\Big(\int_{\mY} \tau^{-1} K(x,y) d\nu_t(y) - \int_{\mY}  \tau^{-1} K(x,y) d\mK^+(\mu_t)(y) \Big) d (\partial_t \mu_t(x)) \\
    & =-\tau  \int_{\mX}\Big(\log (\mZ^-(\mK^+(\mu_t)) \mK^-(\mK^+(\mu_t)) ) - \log (\mZ^-(\nu_t) \mK^-(\nu_t)) \Big) d (\partial_t \mu_t(x)) \\
    & = \tau^2\int_{\mX} \nabla_x \log 
    \Big(\frac{d \mK^-(\mK^+(\mu_t))}{d\mK^-(\nu_t)} \Big) \cdot \nabla_x \log 
    \Big(\frac{d\mu_t}{ d \mK^-(\nu_t)}\Big) d\mu_t(x)\\
    & = \tau^2\Big(\int_{\mX} \Big|\nabla_x \log 
    \Big(\frac{d \mK^-(\mK^+(\mu_t))}{d\mK^-(\nu_t)} \Big) \Big|^2 d\mu_t(x)\\
    & \qquad + \int_{\mX} \nabla_x \log 
    \Big(\frac{d \mK^-(\mK^+(\mu_t))}{d\mK^-(\nu_t)} \Big) \cdot \nabla_x \log 
    \Big(\frac{d \mu_t}{ d \mK^-(\mK^+(\mu_t))}\Big) d\mu_t(x)\Big)\\
    & \leq \frac{\tau^2}{2} \mI(\mu_t | \mK^-(\mK^+(\mu_t))) + \frac{3\tau^2}{2} \int_{\mX} \Big|\nabla_x \log 
    \Big(\frac{d \mK^-(\mK^+(\mu_t))}{d\mK^-(\nu_t)} \Big) \Big|^2 d\mu_t(x)\\
    & \leq \frac{\tau^2}{2} \mI(\mu_t | \mK^-(\mK^+(\mu_t))) + 3K_{xy}^2\diam (\mY)^2 \mH(\nu_t| \mK^+(\mu_t)),
\end{aligned}
$$
where we have used \eqref{eq:reskl2} in the last inequality above. The estimate \eqref{eq:dtL2} then follows from above, \eqref{eq:dL2-2} and \eqref{eq:dL2-1}. This completes the proof of the proposition. 
\end{proof}

\begin{proof}[Proof of Lemma \ref{lem:laplace}]
The proof of the lemma can be found in \cite[Appendix C.4]{domingo2020mean}. We would like to elaborate on the proof of \eqref{eq:epsNash}. In fact, by tracking the proof of \cite[Theorem 5]{domingo2020mean}, one sees that $(\mu_{\tau}^\ast, \nu_{\tau}^\ast)$ is an $\epsilon$-Nash equilibrium provided that 
$$
e^{\frac{\eps}{2\tau}} \Big( \frac{\text{Vol}(B_{x}^\delta)}{1- \text{Vol}(B_{x}^\delta)}  \vee  \frac{\text{Vol}(B_{y}^\delta)}{1- \text{Vol}(B_{y}^\delta)}\Big) \geq 2 (4\|K\|_{\infty}/\epsilon - 1),
$$
where we recall that $\delta = \epsilon/(2\text{Lip}(K))$. Thanks to the lower bound on the volume of small balls in $\mX$ and $\mY$, the inequality above holds if 
$$
e^{\frac{\epsilon}{2\tau}} \geq C_1 \epsilon^{-(d_{\mX}\vee d_{\mY} +1)}
$$
for some $C_1>0$ depending only on $K,d_{\mX}, d_{\mY}$.  Now with the choice \eqref{eq:epsNash} for $\epsilon$ with $\beta > d_{\mX}\vee d_{\mY} +1$, one has for $\tau$ sufficiently small that 
$$
\begin{aligned}
    e^{\frac{\eps}{2\tau}} \geq  \frac{1}{\tau^{\beta}} > \frac{C_1}{\beta^{d_{\mX} \vee d_{\mY} + 1} }  \Big(\frac{1}{\tau \log(1/\tau)}\Big)^{d_{\mX} \vee d_{\mY} + 1} = C_1 \epsilon^{-(d_{\mX}\vee d_{\mY} +1)}.
\end{aligned}
$$
\end{proof}

\section{Proofs of convergence for Mean-Field GDA in the fast descent regime} \label{sec:appmfgda}
Let us first state a proposition which bounds the time-derivative of $\mL_2(\nu_t)$ and $\mL_4(\mu_t,\nu_t)$. 

\begin{proposition}\label{prop:ml24} Let $(\mu_t,\nu_t)$ be the solution to the DGA dynamics \eqref{eq:WGDA}. Then 
    \begin{equation}\label{eq:dmL3}
        \frac{d}{dt} \mL_3(\nu_t) \leq -\frac{\eta\tau^2}{2} \mI(\nu_t | \mK^+(\mK^-(\nu_t))) + \eta K_{xy}^2 \diam(\mX)^2  \cdot \mH(\mu_t | \mK^-(\nu_t))
    \end{equation}
    and 
    \begin{equation}\label{eq:dmL4}
     \frac{d}{dt} \mL_4(\mu_t, \nu_t) \leq -\tau^2 \mI(\mu_t | \mK^-(\nu_t)) + \frac{\eta \tau^2}{2} \mI(\nu_t | \mK^+(\mK^-(\nu_t))) + 3\eta K_{xy}^2 \diam(\mX)^2\cdot \mH(\mu_t | \mK^-(\nu_t)).
      \end{equation}
\end{proposition}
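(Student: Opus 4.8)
The plan is to mirror the proof of Proposition~\ref{prop:diffL} with the roles of $\mu$ and $\nu$, of $\mK^+$ and $\mK^-$, and of $\mX$ and $\mY$ interchanged, the crucial bookkeeping point being that in \eqref{eq:WGDA} the time-scale $\eta$ now multiplies the $\nu$-equation rather than the $\mu$-equation. I would begin by rewriting \eqref{eq:WGDA} in transport form,
$$
\partial_t\mu_t=\tau\,\nabla_x\cdot\Big(\mu_t\,\nabla_x\log\tfrac{d\mu_t}{d\mK^-(\nu_t)}\Big),\qquad \partial_t\nu_t=\eta\tau\,\nabla_y\cdot\Big(\nu_t\,\nabla_y\log\tfrac{d\nu_t}{d\mK^+(\mu_t)}\Big),
$$
so that the factor $\eta$ is carried only by the $\nu$-flow, and then running the two functional computations below; the convexity of $\nu\mapsto\log Z^-(\nu)$ from Lemma~\ref{lem:convpart} is not actually needed here (it enters only later, through Lemma~\ref{lem:bdL}).

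For \eqref{eq:dmL3}: I would use $\mL_3(\nu)=\mE_2(\nu^\ast)-\mE_2(\nu)$ with $\mE_2(\nu)=\tau\mH(\nu)-\tau\log Z^-(\nu)$, together with the analogue of the first-variation identity \eqref{eq:dlogZp}, namely $\frac{\partial}{\partial\nu}\log Z^-(\nu)=-\tau^{-1}\int_{\mX}K(x,y)\,d\mK^-(\nu)(x)=-\log\mK^+(\mK^-(\nu))-\log Z^+(\mK^-(\nu))$, which gives $\frac{\partial\mL_3}{\partial v}\big|_{\nu=\nu_t}=\tau\log\big(d\nu_t/d\mK^+(\mK^-(\nu_t))\big)$ up to an additive constant annihilated by the zero total mass of $\partial_t\nu_t$. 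Substituting into the $\nu$-equation and integrating by parts yields
$$
\frac{d}{dt}\mL_3(\nu_t)=-\eta\tau\int_{\mY}\nabla_y\log\tfrac{d\nu_t}{d\mK^+(\mK^-(\nu_t))}\cdot\nabla_y\log\tfrac{d\nu_t}{d\mK^+(\mu_t)}\,d\nu_t.
$$
Inserting $\nabla_y\log\big(d\mK^+(\mK^-(\nu_t))/d\mK^+(\mu_t)\big)$ and applying Young's inequality extracts the dissipative term $-\tfrac{\eta\tau}{2}\mI(\nu_t|\mK^+(\mK^-(\nu_t)))$ plus a residual $\tfrac{\eta\tau}{2}\int_{\mY}\big|\nabla_y\log(d\mK^+(\mK^-(\nu_t))/d\mK^+(\mu_t))\big|^2 d\nu_t$. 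Since $\mK^+(\mK^-(\nu_t))$ and $\mK^+(\mu_t)$ are Gibbs measures whose log-densities differ by $\tau^{-1}\int_{\mX}K(x,y)\,(d\mK^-(\nu_t)-d\mu_t)(x)$, the mean value theorem applied to $\nabla_yK(\cdot,y)$ with $\|\nabla^2_{xy}K\|_\infty\le K_{xy}$ — exactly as in \eqref{eq:reskl}–\eqref{eq:reskl2}, but with the difference of measures now living on $\mX$ — together with Pinsker's inequality bounds this residual by $\tfrac{\eta K_{xy}^2\diam(\mX)^2}{\tau}\mH(\mu_t|\mK^-(\nu_t))$, giving \eqref{eq:dmL3}.

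For \eqref{eq:dmL4}: I would use $\mL_4(\mu,\nu)=\tau\mH(\mu|\mK^-(\nu))$ from \eqref{eq:ml4} and differentiate in both arguments,
$$
\frac{d}{dt}\mL_4(\mu_t,\nu_t)=\tau\int_{\mX}\log\tfrac{d\mu_t}{d\mK^-(\nu_t)}\,d(\partial_t\mu_t)-\tau\int_{\mX}\tfrac{d}{dt}\big(\log\mK^-(\nu_t)\big)\,d\mu_t.
$$
The first term, via the $\mu$-equation (which carries no $\eta$), equals $-\tau\mI(\mu_t|\mK^-(\nu_t))$. For the second I would write $\tfrac{d}{dt}\log\mK^-(\nu_t)(x)=\int_{\mY}\big(-\tau^{-1}K(x,y)-\tfrac{\partial\log(Z^-(\nu))}{\partial\nu}(\nu_t)(y)\big)\,d(\partial_t\nu_t)(y)$, so that after integrating against $d\mu_t$ the integrand collapses — modulo a constant killed by the zero mass of $\partial_t\nu_t$ — to $\tau\log\big(d\mK^+(\mu_t)/d\mK^+(\mK^-(\nu_t))\big)$; feeding in the $\nu$-equation (which carries the $\eta$), integrating by parts and performing the same Young splitting and mean-value estimate as above produces $\tfrac{\eta\tau}{2}\mI(\nu_t|\mK^+(\mK^-(\nu_t)))+\tfrac{3\eta K_{xy}^2\diam(\mX)^2}{\tau}\mH(\mu_t|\mK^-(\nu_t))$, which gives \eqref{eq:dmL4}.

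The calculations are structurally identical to those of Section~\ref{sec:applem}, so the only genuine difficulty is bookkeeping: ensuring the factor $\eta$ is attached to the correct flow and therefore ends up multiplying \emph{both} terms of \eqref{eq:dmL3} but only the cross terms (not the leading dissipation $-\tau\mI(\mu_t|\mK^-(\nu_t))$) of \eqref{eq:dmL4}; handling $\tfrac{d}{dt}\log\mK^-(\nu_t)$ through the functional chain rule for $\log Z^-$ so that the nuisance additive constants vanish against the zero-mass measures $\partial_t\mu_t,\partial_t\nu_t$; and noting that the mean value theorem now produces $\diam(\mX)$ rather than $\diam(\mY)$, since the relevant difference of measures $\mK^-(\nu_t)-\mu_t$ lives on $\mX$.
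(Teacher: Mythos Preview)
Your proposal is correct and follows essentially the same approach as the paper's own proof, which explicitly notes that it ``follows essentially the same reasoning as the proof of Proposition~\ref{prop:diffL}'' and then records precisely the cross-product identities, Young-inequality splittings, and mean-value/Pinsker residual bounds that you describe. Your bookkeeping on where the factor $\eta$ lands and why $\diam(\mX)$ replaces $\diam(\mY)$ matches the paper exactly.
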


\begin{proof}
The proof follows essentially the same reasoning as the proof of Proposition \ref{prop:diffL}. In fact, it can be shown by a straightforward calculation that 
$$
\begin{aligned}
     \frac{d}{dt} \mL_2(\nu_t) & = -\eta \tau^2 \int_{\mY} \nabla_{y} \log \Big(\frac{d\nu_t}{d \mK^+(\mK^-(\nu_t))}\Big) \cdot \nabla_{y} \log \Big(\frac{d\nu_t}{d \mK^+(\mu_t)}\Big)d\nu_t(y)\\
     & \leq -\frac{\eta \tau^2}{2} \mI(\nu_t | \mK^+(\mK^-(\nu_t))) +
     \frac{\eta\tau^2}{2} \int_{\mY} \Big|\nabla_y \log \Big(\frac{d \mK^+(\mK^-(\nu_t))}{d \mK^+(\mu_t)}\Big)\Big|^2 d\nu_t(y)
\end{aligned}
$$
Similar to \eqref{eq:reskl2}, one can obtain that 
$$
\begin{aligned}
\frac{\eta\tau^2}{2} \int_{\mY} \Big|\nabla_y \log \Big(\frac{d \mK^+(\mK^-(\nu_t))}{d \mK^+(\mu_t)}\Big)\Big|^2 d\nu_t(y) 
\leq \eta K_{xy}^2 \diam(\mX)^2 \cdot \mH(\mu_t | \mK^-(\nu_t)).
\end{aligned}
$$
Combining the last two inequalities yield \eqref{eq:dmL3}.

As for time-derivative of $\mL_4(\mu_t,\nu_t)$, one has that
$$
\begin{aligned}
\frac{d}{dt} \mL_4(\mu_t,\nu_t)  =  - \tau^2 \mI(\mu_t | \mK^-(\nu_t)) 
-\eta\tau^2 \int_{\mY} \nabla_y \log \Big(\frac{d \mK^+(\mu_t)}{d \mK^+(\mK^-(\nu_t))}\Big)
\cdot \nabla_y \log \Big(\frac{d\nu_t}{d \mK^+(\mu_t)}\Big)d\nu_t(y). 
\end{aligned}
$$
The second term on the right side above is bounded by 
$$
\begin{aligned}
   &  \eta\tau^2 \mI(\nu_t | \mK^+(\mK^-(\nu_t))) 
    + \frac{3\eta\tau^2}{2} \int_{\mY} \Big|\nabla_y \log\Big(\frac{d \mK^+(\mK^-(\nu_t))}{d \mK^+(\mu_t)}\Big) \Big|^2 d\nu_t(y) \\
    & \leq  \frac{\eta\tau^2}{2} \mI(\nu_t | \mK^+(\mK^-(\nu_t))) 
    + 3\eta K_{xy}^2\diam(\mX)^2 \cdot \mH(\mu_t|\mK^-(\nu_t)). 
\end{aligned}
$$
The estimate \eqref{eq:dmL4} follows directly from the last two inequalities. 
\end{proof}

\begin{proof}[Proof of Part (ii) of Theorem \ref{thm:main1} ] It follows from Proposition \ref{prop:ml24} that 
$$
\begin{aligned}
    \frac{d}{dt} \widetilde{\mL}(\mu_t,\nu_t) & \leq -\frac{\eta\tau^2}{2} (1-\gamma) \mI(\nu_t | \mK^+(\mK^-(\nu_t)))
     - \gamma\tau^2 \mI(\mu_t | \mK^-(\nu_t)) \\
     & + \eta K_{xy}^2\diam(\mX)^2(1+ 3\gamma)\cdot  \mH(\mu_t|\mK^-(\nu_t)). 
\end{aligned}
  $$  
  Thanks to Lemma \ref{lem:LSI} and \eqref{eq:ml3},
$$
\begin{aligned}
    \tau\mI(\nu_t | \mK^+(\mK^-(\nu_t))) & \geq \tau \lambda_{LS} \mH(\nu_t | \mK^+(\mK^-(\nu_t)))\\
    & \geq  \lambda_{LS} \mL_3(\nu_t).
\end{aligned}
$$
As a result of above and \eqref{eq:ml4}, 
$$\begin{aligned}
     \frac{d}{dt} \widetilde{\mL}(\mu_t,\nu_t) & \leq -\frac{\eta\tau}{2} (1-\gamma) \lambda_{LS}  \mL_3(\nu_t)
     - \tau\Big( \gamma \lambda_{LS}  -\eta K_{xy}^2\diam(\mX)^2(1+ 3\gamma)\Big)\mL_4(\mu_t,\nu_t) .
\end{aligned}
$$
Therefore setting 
$$
\eta  = \frac{\gamma\tau^2\lambda_{LS}}{2K_{xy}^2\diam(\mX)^2 (1+ 3\gamma)}
$$
and applying the Gr\"onwall's inequality to above leads to 
$$
\begin{aligned}
    \widetilde{\mL}(\mu_t,\nu_t) \leq e^{-\alpha t} \widetilde{\mL}(\mu_0,\nu_0)
\end{aligned}
$$
with 
$$
\alpha = \frac{\tau\lambda_{LS}}{2} \Big(1\wedge  \eta(1-\gamma)\Big).
$$
\end{proof}

\begin{proof}[Proof of Part (ii) of Theorem \ref{thm:main2}]
    Thanks to Part (ii) of Theorem \ref{thm:main1} and the bound \eqref{eq:ml3}, we have that 
\begin{equation}\label{eq:munut1}
   \tau\mH(\nu_t | \nu^\ast) \leq \widetilde{L}(\mu_t,\nu_t)  \leq e^{-\alpha_1 t} \widetilde{L}(\mu_0,\nu_0)
\end{equation}
and that 
\begin{equation}\label{eq:munut2}
\tau \mH(\mu_t | \mK^- (\nu_t)) = \mL_4 (\mu_t,\nu_t) \leq \frac{e^{-\alpha_1 t}}{\gamma} \widetilde{L}(\mu_0,\nu_0).
\end{equation}
Next to obtain the exponential decay of $\tau \mH(\mu_t | \mu^\ast)$, observe that 
\begin{equation}\begin{aligned}\label{eq:bdnutnu}
        \tau \mH(\mu_t | \mu^\ast) & =  \tau \mH(\mu_t | \mK^- (\nu_t)) + \tau \int_{\mX} (\log \mK^- (\nu_t) - \log \mu^\ast) d\mu_t\\
    & = \tau \mH(\mu_t | \mK^- (\nu_t)) + \tau \int_{\mX} (\log \mK^- (\nu_t) - \log \mu^\ast) d(\mu_t - \mu^\ast) - \tau \mH(\mu^\ast | \mK^- (\nu_t))\\
    & \leq \tau \mH(\mu_t | \mK^- (\nu_t)) + \tau \int_{\mX} (\log \mK^- (\nu_t) - \log \mu^\ast) d(\mu_t - \mu^\ast).
\end{aligned}
\end{equation}
Since $\mu^\ast = \mK^- (\nu^\ast)$, we can write 
$$\begin{aligned}
    \log \mK^- (\nu_t) - \log \mu^\ast  = -\tau^{-1} \int_{\mY} K(x,y) (\nu_t - \nu^\ast)(dy) - (\log Z^-(\nu_t) - \log Z^-(\nu^\ast)).
\end{aligned}
$$
Hence the second term on the RHS of the last line of \eqref{eq:bdnutnu} can be bounded by in a similar manner as \eqref{eq:bdmutmu}. Namely, 
$$
\begin{aligned}
    \tau \int_{\mX} (\log \mK^- (\nu_t) - \log \mu^\ast) d(\mu_t - \mu^\ast)& = \int_{\mX}\int_{\mY} K(x,y) (\nu_t - \nu^\ast)(dy)(\mu_t - \mu^\ast)(dx)\\
    & \leq \frac{\tau}{2} \mH(\mu_t | \mu^\ast) + \frac{2 \|K\|_{\infty}^2}{\tau} \mH(\nu_t | \nu^\ast).
\end{aligned}
$$
Combining the last inequality with \eqref{eq:bdnutnu} leads to 
$$\begin{aligned}
    \tau \mH(\mu_t | \mu^\ast) & \leq 2\tau \mH(\mu_t | \mK^- (\nu_t))  + \frac{4 \|K\|_{\infty}^2}{\tau} \mH(\nu_t | \nu^\ast)\\
    & \leq \Big(\frac{2}{\gamma} + \frac{4 \|K\|_{\infty}^2}{\tau^2}\Big)e^{-\alpha_2 t}\widetilde{L}(\mu_0,\nu_0),
\end{aligned}
$$
where we have used \eqref{eq:munut1} and \eqref{eq:munut2} in the last inequality. 
\end{proof}

\section{Proof of convergence for the annealed dynamics in the fast descent regime}
\label{sec:appsa}

\begin{proof}[Proof of Part (ii) of Theorem \ref{thm:sa1}]
By a direct calculation, one has 
\begin{equation}
    \begin{aligned}
        \frac{d}{dt} \mL_3(\nu_t) \leq \frac{\tau_t^\prime}{\tau_t} (\mL_3(\nu_t) + 2\|K\|_{\infty}) -\frac{\eta_t \tau_t^2}{2} \mI(\nu_t | \mK^+(\mK^-(\nu_t))) + \eta_t K_{xy}^2 \diam(\mX)^2 \mH(\mu_t | \mK^-(\nu_t))
    \end{aligned}
\end{equation}
and 
\begin{equation}
    \begin{aligned}
        \frac{d}{dt} \mL_4(\mu_t, \nu_t)& \leq 
        \frac{\tau_t^\prime}{\tau_t} (\mL_4(\nu_t) + 2\|K\|_{\infty})
        -\tau_t^2 \mI(\mu_t | \mK^-(\nu_t)) + \frac{\tau_t^2\eta_t}{2} \mI(\nu_t | \mK^+(\mK^-(\nu_t)))\\
        & +  3\eta_t K_{xy}^2 \diam(\mX)^2 \mH(\mu_t | \mK^-(\nu_t)).
    \end{aligned}
\end{equation}
Combining the last two displays, one can obtain the time-derivative of $\widetilde{\mL}(\mu_t,\nu_t)$ as follows
$$
\begin{aligned}
    \frac{d}{dt} \widetilde{\mL}(\mu_t,\nu_t) & \leq \frac{\tau_t^\prime}{\tau_t} \widetilde{\mL}(\mu_t,\nu_t) + \frac{\tau_t^\prime}{\tau_t} 2(1+\gamma) \|K\|_{\infty} -\gamma\tau_t^2 \mI(\mu_t | \mK^-(\nu_t)) \\
    & -\frac{(1-\gamma)\eta_t \tau_t^2}{2} \mI(\nu_t | \mK^+(\mK^-(\nu_t)))
    +\eta_t(1+3\gamma) K_{xy}^2 \diam(\mX)^2 \mH(\mu_t | \mK^-(\nu_t))\\
    & \leq \frac{\tau_t^\prime}{\tau_t} \widetilde{\mL}(\mu_t,\nu_t) - \frac{(1-\gamma)\eta_t\tau_t \lambda_{LS}(\tau_t)}{2} \mL_3(\nu_t) - \tau_t\Big(\gamma\lambda_{LS}(\tau_t)\\
    & \qquad -  \frac{\eta_t(1+3\gamma) K_{xy}^2 \diam(\mX)^2}{\tau_t^2}\Big)\mL_4(\mu_t,\nu_t) + \frac{2\tau_t^\prime}{\tau_t}(1+\gamma) \|K\|_{\infty}\\
    & \leq -\alpha_2(t) \widetilde{\mL}(\mu_t,\nu_t)+ \frac{2\tau_t^\prime}{\tau_t} (1+\gamma) \|K\|_{\infty},
\end{aligned}
$$
where 
$$\begin{aligned}
\alpha_2(t) & = \frac{(1-\gamma)\eta_t\tau_t \lambda_{LS}(\tau_t)}{2} \wedge \tau_t \Big(\lambda_{LS}(\tau_t) - \frac{\eta_t(1+3\gamma) K_{xy}^2 \diam(\mX)^2}{\gamma\tau_t^2}-\frac{\tau_t^\prime}{\tau_t}\Big) \\
& \geq \frac{\xi}{\log t} t^{-(\xi^\ast/\xi)}\Big( C_1\eta_t \wedge \Big(t^{-(\xi^\ast/\xi)}   + \frac{C_2}{t\log t}-\frac{C_3 \eta_t}{\log(t)^2}\Big)\Big).
\end{aligned}$$
Setting $\eta_t = c\log t/t$ for some $c< C_2/C_3$ in the above yields that for every $0<\epsilon<1-\xi^\ast/\xi$ and $t$ large enough 
 $$
 \alpha_2(t) \geq Ct^{-2\xi^\ast/\xi-\epsilon}.
 $$
Therefore we have obtained that 
$$
\begin{aligned}
    \frac{d}{dt} \widetilde{\mL}(\mu_t,\nu_t) \leq 
    -Ct^{-2\xi^\ast/\xi-\epsilon} \widetilde{\mL}(\mu_t,\nu_t) + \frac{C^\prime}{t\log t}. 
\end{aligned}
$$
Similar to the proof of \eqref{eq:bdLmunu}, one can obtain from above that for $t$ large enough
$$
\widetilde{\mL}(\mu_t,\nu_t) \leq  C^{\prime\prime}t^{-1 + 2\xi^\ast/\xi+\epsilon}.
$$
This directly implies the entropy decay bounds in \eqref{eq:Lmutstar2}. Finally the  estimate  \eqref{eq:NImut2} follows from \eqref{eq:Lmutstar2} and the arguments used in  Step 2 of the proof of Theorem \ref{thm:sa1}- Part(i).
\end{proof}
\bibliographystyle{plain}
\bibliography{references}
\end{document}